\documentclass{amsart}

\usepackage{slashed}
\usepackage{mathtools}
\usepackage{float}
\usepackage{amsmath}
\usepackage{amsfonts}
\usepackage{amssymb}
\usepackage{graphicx}
\usepackage{hyperref}
\usepackage{mathrsfs}
\usepackage{pb-diagram}
\usepackage{epstopdf}
\usepackage{amsmath,amsfonts,amsthm,enumerate,amscd,latexsym,curves}
\usepackage{bbm}
\usepackage{epsfig,epsf,xypic,epic}
\usepackage{caption}
\usepackage{subcaption}
\usepackage{tikz}
\usepackage{geometry}
\usetikzlibrary{matrix,arrows,decorations.pathmorphing}
\usetikzlibrary{cd}
\usepackage{tikz-cd}
\usepackage{enumitem}
\makeatletter
\newcommand{\xslalph}[1]{\expandafter\@xslalph\csname c@#1\endcsname}
\newcommand{\@xslalph}[1]{
    \ifcase#1\or a\or b\or c\or \or d\or e\or f\or g\or h\or i
    \or j\or k\or l\or m\or n\or o\or p\or r\or s\or 
    \or t\or u\or v\or z\or \v{z}
    \else\@ctrerr\fi
}
\AddEnumerateCounter{\xslalph}{\@xslalph}{m}
\makeatother

\usetikzlibrary{matrix,arrows,decorations.pathmorphing}

\theoremstyle{theorem}
\newtheorem{theorem}{Theorem}[section]

\newtheorem{lemma}[theorem]{Lemma}
\newtheorem{corollary}[theorem]{Corollary}
\newtheorem{proposition}[theorem]{Proposition}

\theoremstyle{definition}
\newtheorem{definition}[theorem]{Definition}

\newtheorem{remark}[theorem]{Remark}

\numberwithin{equation}{section}

\newcommand{\Id}{\mathrm{Id}}

\newcommand{\End}{\mathrm{End}}

\newcommand{\Li}{\mathrm{Li}}

\newcommand{\std}{\mathrm{std}}

\newcommand{\tw}{\mathrm{tw}}

\newcommand{\Hom}{\mathrm{Hom}}
\newcommand{\dbar}{\ol{\partial}}

\newcommand{\bb}[1]{\mathbb{#1}}

\newcommand{\cu}[1]{\mathcal{#1}}

\newcommand{\ol}[1]{\overline{#1}}

\newcommand{\ul}[1]{\underline{#1}}

\def\Hom{\text{\rm Hom}}

\begin{document}

\title[Brane quantization of $A_n$-resolutions]{Brane quantization of $A_n$-resolutions}

\author[Suen and Yau]{Yat-Hin Suen and YuTung Yau}

\address{Department of Mathematics, National Cheng Kung University}
\email{yhsuen@gs.ncku.edu.tw}
\address{Kavli Institute for the Physics and Mathematics of the Universe (WPI), The University of Tokyo Institutes for Advanced Study, The University of Tokyo, Kashiwa, Chiba 277-8583, Japan}
\email{yu-tung.yau@ipmu.jp}

\thanks{}

\maketitle

\begin{abstract}
    We extend the study of brane quantization via SYZ mirror symmetry to the setting of singular fibers, building on recent joint work with Chan, Leung, and Li in the semi-flat case. We consider a crepant resolution $X\to\bb{C}^2/\bb{Z}_{n+1}$ of the $A_n$-singularity, whose mirror $\check{X}$ is also realized as a resolution of $\bb{C}^2/\bb{Z}_{n+1}$. For each level $k\in\bb{Z}_{>0}$, we construct a space filling coisotropic A-brane $\cu{B}_{cc}^{(k)}$ of $(X,k\omega)$ and determine its mirror B-brane $\check{\cu{B}}_{cc}^{(k)}$ via fiberwise geometric quantization. We then define the endomorphism algebra $\Hom_A(\cu{B}_{cc}^{(k)},\cu{B}_{cc}^{(k)})$ by gluing analytic quantum tori using wall-crossing formulas and establish a mirror isomorphism $\Hom_A(\cu{B}_{cc}^{(k)},\cu{B}_{cc}^{(k)})\cong\Hom_B(\check{\cu{B}}_{cc}^{(k)},\check{\cu{B}}_{cc}^{(k)})$.
\end{abstract}

\section{Introduction}
\label{Section: Introduction}
Cosiotropic A-branes were introduced by Kapustin--Orlov \cite{KapOrl2003} to enlarge the Fukaya category so as to accommodate Kontsevich's homological mirror symmetry conjecture \cite{HMS}. Among them, space-filling coisotropic A-branes are distinguished by the holomorphic symplectic structure $\Omega_X$ on the ambient manifold $X$ induced by their curvature, and play a central role of the brane quantization proposal of Gukov--Witten \cite{GukWit2009}. In their framework, one considers a distinguished space-filling A-brane $\cu{B}_{cc}$ \footnote{They refer to $\cu{B}_{cc}$ as the \emph{canonical coisotropic A-brane}, which explains the notation.} on a complexification $(X, \Omega_X)$ of a K\"ahler manifold $(M, \omega_M)$. They propose that the endomorphism algebra $\Hom_A(\cu{B}_{cc},\cu{B}_{cc})$ provides a holomorphic deformation quantization of $(X, \Omega_X)$, 
while for a Lagrangian A-brane $\cu{B}$ supported on $M$, the morphism space $\Hom_A(\cu{B}, \cu{B}_{cc})$ gives the geometric quantization of $(M, \omega_M)$. These two structures are linked by a folklore action ``$DQ\curvearrowright GQ$'', implemented by the composition map
$$\operatorname{Hom}_A(\cu{B}_{cc},\cu{B}_{cc})\otimes \operatorname{Hom}_A(\cu{B},\mathcal{B}_{cc})\to \operatorname{Hom}_A(\cu{B},\cu{B}_{cc}).$$

In recent joint work \cite{ChaLeuLiSueYau2026} with Chan, Leung, and Li, we make this picture precise in the semi-flat SYZ setting and show its compatibility with SYZ mirror symmetry \cite{ChanLeung10, Leung05, LeuYauZas2000, SYZ}. We consider a $\bb{Z}_{>0}$-family of semi-flat SYZ mirror pairs
\begin{center}
	\begin{tikzcd}
		X_0^{(k)}=(T^*B^{sm}/T_\mathbb{Z}^*B^{sm},k\omega_0) \ar[rd, "p_0"'] & & \check{X}_0^{(k)}=(TB^{sm}/ T_\mathbb{Z}B^{sm},\check{J}_{0, k}) \ar[ld, "\check{p}_0^{(k)}"]\\
		& B^{sm}
	\end{tikzcd}
\end{center}
together with a mirror pair of semi-affine space-filling branes $( \cu{B}_{cc, 0}^{(k)}, \check{\cu{B}}_{cc, 0}^{(k)} )$ under the SYZ transform of \cite{ChaLeuZha2018}, and require that $\{\cu{B}_{cc, 0}^{(k)}\}_{k \in \mathbb{Z}_{>0}}$ determines a fixed holomorphic symplectic form on $X_0 = T^*B^{sm}/T_\mathbb{Z}^*B^{sm}$, up to the expected rescaling by $k$. We realize $\Hom_A(\cu{B}_{cc, 0}^{(k)},\cu{B}_{cc, 0}^{(k)})$ as a non-formal holomorphic deformation quantization of $X_0$, constructed by gluing analytic quantum tori.
By incorporating the ``$DQ \curvearrowright GQ$'' action on Lagrangian fibres of $X_0$ via family Toeplitz operators, we establish for each $k$ a graded algebra isomorphism
\begin{equation}
    \label{eqn: semi-flat mirror transform}
    \Phi_0^{(k)}:\Hom_A(\cu{B}_{cc, 0}^{(k)},\cu{B}_{cc, 0}^{(k)})\to\Hom_B(\check{\cu{B}}_{cc, 0}^{(k)},\check{\cu{B}}_{cc, 0}^{(k)}),
\end{equation}
where $\Hom_B(\check{\cu{B}}_{cc, 0}^{(k)},\check{\cu{B}}_{cc, 0}^{(k)})$ denotes the endomorphism algebra in the derived category of $\check{X}_0^{(k)}$.

Despite this success, the scope of \cite{ChaLeuLiSueYau2026} is restricted to the semi-flat setting. The main obstacle is that, beyond this setting, a mirror transform for coisotropic A-branes at the level of objects is not presently available.

In this article, we take a first step toward extending the brane quantization program to non-semi-flat SYZ fibrations, namely those with singular fibres. As a model case, we consider crepant resolutions of $A_n$-singularity. Let
$$X:=\{(x,y,z)\in\bb{C}^2\times\bb{C}^{\times}:xy=f(z)\},$$
where $f$ is a degree $n+1$ polynomial with $n+1$ distinct roots, and equip $X$ with the symplectic form $\omega$ restricted from the standard one on $\bb{C}^2\times\bb{C}^{\times}$. Then $(X,\omega)$ admits a Lagrangian torus fibration $p:X\to B$ onto $B=\bb{R}^2$ with $n+1$ nodal fibers. Its mirror manifold $\check X$ (and more generally the mirror $\check X^{(k)}$ of $(X,k\omega)$) is obtained from the semi-flat mirror by incorporating wall-crossing corrections.

Over the smooth locus $B^{sm} \subset B$, the pair $(X,\check X)$ reduces to the semi-flat situation studied in \cite{ChaLeuLiSueYau2026}. This allows us to begin with a distinguished mirror pair of semi-affine space-filling branes $(\cu{B}_{cc, 0}^{(k)}, \check{\cu{B}}_{cc, 0}^{(k)})$ on the semi-flat parts. A key observation is that these branes admit natural extensions across the singular fibers. Carrying out this extension requires incorporating the wall-crossing data, which in turn induces corrections to both the deformation quantization algebra $\Hom_A (\cu{B}_{cc, 0}^{(k)}, \cu{B}_{cc, 0}^{(k)})$ and the semi-flat mirror transform $\Phi_0^{(k)}$.

Our main result is that these corrections give rise to space-filling branes $\cu{B}_{cc}^{(k)}$ on $(X, k\omega)$ and $\check{\cu{B}}_{cc}^{(k)}$ on $\check{X}^{(k)}$, whose endomorphism algebras continue to be related by mirror symmetry:
\begin{theorem}[=Theorem \ref{thm:main theorem}]
    For every $k \in \mathbb{Z}_{>0}$, the instanton-corrected mirror transform is a graded algebra isomorphism
    \begin{equation}
        \label{eqn: mirror transform}
        \Phi^{(k)}: \Hom_A(\cu{B}_{cc}^{(k)},\cu{B}_{cc}^{(k)}) \overset{\cong}{\longrightarrow} \Hom_B(\check{\cu{B}}_{cc}^{(k)},\check{\cu{B}}_{cc}^{(k)}).
    \end{equation}
\end{theorem}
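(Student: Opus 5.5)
The plan is to reduce to the semi-flat isomorphism $\Phi_0^{(k)}$ of \cite{ChaLeuLiSueYau2026} chamber by chamber, and then check that the wall-crossing corrections on the two sides match.

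\textbf{Chamber decomposition.} Cover $B^{sm}$ by simply connected chambers $U_i$ whose boundaries are the walls emanating from the $n+1$ nodal points. Over each chamber, $\Hom_A(\cu{B}_{cc}^{(k)},\cu{B}_{cc}^{(k)})$ is by construction a sheaf of analytic quantum tori: convergent series $\sum_{m} a_m \hat{w}^m$, where the $\hat{w}$ are quantized torus coordinates satisfying $\hat{w}_1\hat{w}_2=q\,\hat{w}_2\hat{w}_1$ with $q$ determined by $k$. These are glued across walls by the quantum wall-crossing automorphism. On the semi-flat part I will use $\Phi_0^{(k)}$ from \eqref{eqn: semi-flat mirror transform}, restricted to each $p^{-1}(U_i)$. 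It identifies the local quantum torus with $\Hom_B$ of $\check{\cu{B}}_{cc,0}^{(k)}$ over $\check{p}^{-1}(U_i)$. There the B-side endomorphism algebra is the algebra of holomorphic twisted differential operators on the line bundle (or sheaf) underlying $\check{\cu{B}}_{cc}^{(k)}$, written in the local coordinates $\check{w}$.

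\textbf{Compatibility with gluing.} The central step is to show that the local isomorphisms $\Phi_i$ intertwine the two kinds of gluing. On the A-side the gluing is the quantum cluster transformation
$$\hat w^{m}\mapsto \hat w^{m}\prod_{j}(1+q^{?}\hat w^{v_j})^{\langle m,v_j\rangle},$$
which I would realize as conjugation by a quantum dilogarithm-type element. On the B-side, $\check{X}^{(k)}$ is glued by the classical cluster transformation $\check w^m\mapsto \check w^m(1+\check w^{v})^{\langle m,v\rangle}$, and the brane $\check{\cu{B}}_{cc}^{(k)}$ is glued by the corresponding line bundle transition functions. Differential operators transform by conjugation with these transition functions.

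I would prove the intertwining by evaluating both sides on generators: the local coordinates and the vector fields $\partial_{\log \check w}$. Under $\Phi_0^{(k)}$, a monomial $\hat w^{m}$ should become a shift/multiplication operator of the form $\check w^{m_1}\exp(\hbar\,\partial)$. With this, the quantum wall-crossing factor $(1+q^{1/2}\hat w^{v})$ becomes $(1+\check w^{v})$ conjugated through the shift. This matches the B-side transformation exactly because the level-$k$ quantization parameter coincides with the fiberwise geometric-quantization shift.

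\textbf{Extension over the singular fibres.} The remaining step is to extend the isomorphism over the nodal fibres. I would use a Hartogs/codimension argument: the nodes have codimension two on the B-side, where $\check{X}$ is smooth, and the A-side algebra is defined as the glued algebra of global sections.

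\textbf{Bijectivity and grading.} To conclude, compare global sections. Both algebras are computed as equalizers of the local algebras under the gluing maps, so the compatible family of isomorphisms $\Phi_i$ induces an isomorphism $\Phi^{(k)}$ on global sections. Gradedness is inherited from $\Phi_0^{(k)}$, since the wall-crossing maps preserve degree. I would also need to check convergence: the analytic quantum torus condition on the A-side must correspond to holomorphicity on the B-side. This is inherited from the semi-flat case, because the corrections are polynomial.

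\textbf{Main obstacle.} I expect the hardest part to be the precise matching of the quantum wall-crossing factor, including the $q^{1/2}$ ordering and twisting, with the B-side transition functions of $\check{\cu{B}}_{cc}^{(k)}$ once the twist by $k$ is included. My first attempt would be to compute explicitly for a single wall with $n=0$ and then use the fact that the walls for distinct roots are parallel and commute.
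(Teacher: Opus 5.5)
Your overall architecture matches the paper's: local isomorphisms on the charts $U_i^{\pm}$, compatibility with the wall-crossing gluing checked on generators, then globalization. There is, however, a genuine gap at the central step, because the two local models you intend to intertwine are not the ones in the statement.

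\textbf{The B-side and A-side models.} On the B-side, $\Hom_B(\check{\cu{B}}_{cc}^{(k)},\check{\cu{B}}_{cc}^{(k)})=H^{\bullet}(\check{X}^{(k)},\End(\check{E}^{(k)}))$, where $\check{E}^{(k)}$ is a holomorphic bundle of rank $2k$ whose automorphy factors are built from the clock and shift matrices $C,S$. Its local sections are $\cu{O}$-linear matrix-valued functions, not twisted differential operators on a line bundle. The A-side forces this: a quantum torus at the primitive $2k$-th root of unity $q$ is free of rank $(2k)^2$ over its large center (functions of $u^{2k},w^{2k}$). That is the structure of $\End$ of a rank-$2k$ bundle, not of a ring of differential operators. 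The shift operators of \cite{ChaLeuLiSueYau2026} act on the auxiliary Schwartz variable and become the finite matrices $C^{-1},S$ (cf.\ Appendix~\ref{app:Toeplitz}). Likewise, the gluing \eqref{eqn:quantum wall crossing} of $\cu{A}_X^{(k)}$ is not a quantum cluster transformation with factor $(1+q^{\frac12}w^{\mp1})$; the quantum dilogarithm has poles at this $q$. It is multiplication by the central element $(1+w^{\mp 2k})^{\frac{1}{2k}}$.

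\textbf{The missing local input.} What you need is Lemma~\ref{lem:generation}: every local section of $\End(\check{E}^{(k)})$ is uniquely $\sum_{a,b=0}^{2k-1}f_{ab}(\check{u},\check{w})e^{\pi(a\check{z}_2-b\check{z}_1)}S^aC^b$. Hence $u\mapsto e^{\pi\check{z}_1}C^{-1}$, $w\mapsto e^{\pi\check{z}_2}S$ is a local isomorphism, and $\Phi_U(w)^{2k}=\check{w}$. So $\Phi_U(w)$ is a $2k$-th root of $\check w$, not $\check w$ composed with a shift. Since $\Phi_0^{(k)}$ restricts locally to exactly this map, your matching fails. Over $B_i^+$ the gauge transformation $\check{G}_{U_i^-U_i^+}$ is scalar, so one needs $\Phi_{U_i^-}(u_{U_i^+})=e^{\pi\check{z}_{U_i^+,1}}C^{-1}=e^{\pi\check{z}_1}(1+\check{w}^{-1})^{\frac{1}{2k}}C^{-1}$. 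The paper's central factor gives exactly this, because $(\Id+\Phi_U(w)^{-2k})^{\frac{1}{2k}}=(1+\check{w}^{-1})^{\frac{1}{2k}}$. A factor $(1+q^{\frac12}w^{-1})$ would instead produce a nonzero $C^{-1}S^{-1}$-component, which no scalar gauge can remove.

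\textbf{The non-scalar gluing over $B_i^-$.} Your line-bundle picture has nothing corresponding to it. There the monodromy $m_1\mapsto m_1-m_2$ changes the automorphy matrices by $q^{\frac12 m_2^2}S^{-m_2}$. So $\check{E}^{(k)}$ is glued using $T=\sum_{r=0}^{2k-1}q^{-\frac12 r^2}S^r$, and the check on $B_i^-$ reduces to the identity $TC^{-1}=q^{\frac12}C^{-1}S^{-1}T$. This identity is what absorbs the factor $w^{-1}$ in $u_{U_i^+}=u_{U_i^-}w^{-1}(1+w^{2k})^{\frac{1}{2k}}$.

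\textbf{Higher cohomology.} The theorem concerns all cohomological degrees, so an equalizer argument on global sections only gives $H^0$. The grading is cohomological, not something the wall-crossing maps ``preserve''. You need three things:
\begin{enumerate}
\item[(i)] the compatible local isomorphisms assemble into an isomorphism of sheaves of algebras;
\item[(ii)] Lemmas~\ref{lem:A isomorphism} and~\ref{lem:B isomorphism}, to pass to the smooth locus;
\item[(iii)] the refinement $\{U_{i,j}^{\pm}\}$, whose preimages and their intersections are Stein.
\end{enumerate}
Then both sides are computed by \v{C}ech complexes identified termwise, compatibly with cup products.
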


As in \cite{ChaLeuLiSueYau2026}, by identifying a point $(x,\check{y})\in\check{X}_0$ with a Lagrangian brane $\cu{B}_{(x,\check{y})}=(F_x,\cu{L}_{\check{y}})$ that supports on a Lagrangian fiber $F_x\subset X$, the fiber $\check{E}_{(x,\check{y})}^{(k)}$ is given by the the K\"ahler polarized Hilbert space
$$\Hom_A(\cu{B}_{(x,\check{y})},\cu{B}_{cc}^{(k)}):=H^0(F_x,L^{\otimes k}|_{F_x}\otimes\cu{L}_{\check{y}}^*\otimes\sqrt{K_x}),$$
where $\sqrt{K_x}$ is a half-form bundle on $F_x$ with respect to an axillary choice of flat complex structure. As a corollary of our main theorem, we obtain the brane quantization of $A_n$-resolutions.

\begin{corollary}
    For each level $k\in\bb{Z}_{>0}$ and each $(x, \check{y}) \in \check{X}_0$, there is an action $$\Hom_A(\cu{B}_{cc}^{(k)},\cu{B}_{cc}^{(k)})\otimes\Hom_A(\cu{B}_{(x,\check{y})},\cu{B}_{cc}^{(k)})\to\Hom_A(\cu{B}_{(x,\check{y})},\cu{B}_{cc}^{(k)}).$$
\end{corollary}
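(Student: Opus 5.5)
The plan is to transport the obvious module structure on the B-side through the mirror isomorphism $\Phi^{(k)}$ of Theorem \ref{thm:main theorem}. On the mirror, $\check{\cu{B}}_{cc}^{(k)}$ is a holomorphic vector bundle $\check{E}^{(k)}$ (or a sheaf) over $\check{X}^{(k)}$. Its fiber at a point $(x,\check{y})\in\check{X}_0\subset\check{X}^{(k)}$ is identified, through fiberwise geometric quantization, with
$$\check{E}^{(k)}_{(x,\check{y})}=\Hom_A(\cu{B}_{(x,\check{y})},\cu{B}_{cc}^{(k)})=H^0(F_x,L^{\otimes k}|_{F_x}\otimes\cu{L}_{\check{y}}^*\otimes\sqrt{K_x}).$$
The B-side algebra $\Hom_B(\check{\cu{B}}_{cc}^{(k)},\check{\cu{B}}_{cc}^{(k)})$ acts on $\check{E}^{(k)}$. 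In degree zero the elements are global holomorphic endomorphisms, and these act on each fiber by evaluation at the point $(x,\check{y})$. The higher-degree pieces are $H^q(\check X^{(k)},\End \check E^{(k)})$, and for a fiberwise action they should act trivially, or the action should be restricted to degree zero. A cleaner alternative is to use the composition $\Hom(\check{\cu{O}}_{pt},\check E)\otimes\Hom(\check E,\check E)\to\Hom(\check{\cu O}_{pt},\check E)$ in the derived category, where $\check{\cu{O}}_{pt}$ is the skyscraper sheaf at $(x,\check{y})$. That skyscraper is the SYZ mirror of $\cu{B}_{(x,\check{y})}$, so this composition is exactly the categorical shape of the A-side composition map.

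Step one is to define the A-side action as the pullback of this composition along $\Phi^{(k)}$. Because $\Phi^{(k)}$ is a graded algebra isomorphism, the associativity and unit axioms of the B-side module structure carry over, and we obtain a genuine left module structure. The identification of $\check{E}^{(k)}_{(x,\check y)}$ with $\Hom_A(\cu{B}_{(x,\check{y})},\cu{B}_{cc}^{(k)})$ has to be the one used in constructing the mirror brane $\check{\cu{B}}_{cc}^{(k)}$. Since $(x,\check{y})$ lies in the semi-flat part $\check X_0$, this identification coincides with the semi-flat one from \cite{ChaLeuLiSueYau2026}, possibly modified by the wall-crossing gluing.

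Step two is the main obstacle: making sure the action is well defined across walls. The point $(x,\check y)$ lies in one chamber, but the instanton-corrected gluing changes the coordinate description of $\check{E}^{(k)}$ near walls. So we must check that the evaluation-at-a-point map does not depend on which chart of the glued quantum tori is used. I would handle this by noting that $\Phi^{(k)}$ was built compatibly with the wall-crossing transformations. Then in each chamber the action agrees with the semi-flat ``$DQ\curvearrowright GQ$'' action by family Toeplitz operators from \cite{ChaLeuLiSueYau2026}, and on overlaps the two descriptions differ by the same wall-crossing automorphism on both factors. The resulting action is therefore chart-independent.

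Finally, I would remark that on each chamber the action is given concretely: elements of the analytic quantum torus act on the theta-function basis of $H^0(F_x,\dots)$ through the Toeplitz formula. This recovers the expected brane quantization picture.
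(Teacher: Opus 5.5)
Your proposal is essentially the paper's argument: the paper also obtains the corollary by transporting, through the isomorphism $\Phi^{(k)}$ of Theorem \ref{thm:main theorem}, the natural action of $\Hom_B(\check{\cu{B}}_{cc}^{(k)},\check{\cu{B}}_{cc}^{(k)})$ on the fibre $\check{E}^{(k)}_{(x,\check{y})}$, which it identifies with $\Hom_A(\cu{B}_{(x,\check{y})},\cu{B}_{cc}^{(k)})=H^0(F_x,L^{\otimes k}|_{F_x}\otimes\cu{L}_{\check{y}}^*\otimes\sqrt{K_x})$; letting positive-degree classes act by zero, as you suggest, is legitimate because they form an ideal. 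Your Step two is not needed for the existence claim, since evaluating a global section of $\End(\check{E}^{(k)})$ at a point is intrinsic and any fixed local identification of the fibre already gives an action, and your assertion that the chart-wise identifications differ by matching wall-crossing automorphisms is left unproved (the paper does not address it either).
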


The resulting construction furnishes a mirror transform for non-Lagrangian A-branes at both the object and morphism levels in the $A_n$-resolution setting. As a prototypical non-semi-flat example, it demonstrates how the semi-flat brane quantization picture can be extended across singular fibres and suggests a guiding principle for further extensions of the brane quantization program.

\subsection{Key ideas and organization of the article}

We conclude this introduction with a brief overview of the main ideas underlying the wall-crossing correction and the organization of the article.

Section \ref{Section 2} reviews the geometric background and establishes the notation used throughout the article.

Section \ref{sec: A brane} is devoted to the A-side correction. We observe that the monodromy of the affine complex coordinates induced by the semi-flat A-brane $\cu{B}_{cc,0}$ agrees with the monodromy of the complex coordinates on the semi-flat mirror manifold $\check{X}_0$. By correcting these coordinates via the same wall-crossing formula as in the construction of $\check{X}$, we show that $\cu{B}_{cc,0}$ naturally extends to a space-filling A-brane $\cu{B}_{cc}$
on $(X,\omega)$.

We then correct $\Hom_A (\cu{B}_{cc, 0}^{(k)}, \cu{B}_{cc, 0}^{(k)})$ by modifying the gluing of analytic quantum tori using wall-crossing factors with a non-standard feature: they are inherently multi-valued, since they involve $2k$-th roots. After choosing local branches, different choices yield canonically isomorphic algebras, so that the local constructions glue consistently to a sheaf 
$\cu{A}_X^{(k)}$ of algebras on $B$. We propose that
$$\Hom_A(\cu{B}_{cc}^{(k)},\cu{B}_{cc}^{(k)}):=H^{\bullet}(B,\cu{A}_X^{(k)}).$$
To align with Gukov--Witten proposal, we further show that for each level $k\in\bb{Z}_{>0}$, there is a deformation quantization of $\cu{O}_X$ relative to $B$ (Definition \ref{def:relative DQ}) $\cu{A}_{X,\hbar}^{(k)}$ whose specialization at $\hbar=-\frac{\pi}{k}$ is $\cu{A}_X^{(k)}$.

Section \ref{sec:mirror B brane} addresses the B-side correction. The semi-flat mirror B-brane $\check{\cu{B}}_0^{(k)}$ carries an underlying unitary vector bundle $(\check{E}_0^{(k)},\nabla^{\check{E}_0^{(k)}})$ under the construction of \cite{ChaLeuZha2018}. A simple calculation shows the connection $\nabla^{\check{E}_0^{(k)}}$, or equivalently, the unitary structure, does not extend to $\check{X}^{(k)}$. Nevertheless, the holomorphic structure of $\check{E}_0^{(k)}$ does! This gives the mirror B-brane $\check{\cu{B}}_{cc}^{(k)}:=(\check{X}^{(k)},\check{E}^{(k)})$ of $\cu{B}_{cc}^{(k)}$, and we define
$$\Hom_B(\check{\cu{B}}_{cc}^{(k)},\check{\cu{B}}_{cc}^{(k)}):= H^{\bullet}(\check{X}^{(k)},\End(\check{E}^{(k)})).$$

Section \ref{sec:mirror theorem} contains the proof of our main theorem. In the semi-flat case \cite{ChaLeuLiSueYau2026}, despite its generality, the corresponding mirror theorem is established using the Weil–Berezin transform, which is rather abstract and not explicit. In contrast, in our explicit model we construct the isomorphism $\Phi^{(k)}$ in \eqref{eqn: mirror transform} by classifying local sections and generators of $\End(\check{E}^{(k)})$. Locally, $\Phi^{(k)}$ is defined by sending generators of the quantum tori to corresponding local sections of $\End(\check{E}^{(k)})$, yielding an algebra isomorphism. Finally, to show that $\Phi^{(k)}$ is globally well-defined, we verify its compatibility with the wall-crossing formulas on both sides. It suffices to check this compatibility on the generators of the quantum tori.

In Appendix \ref{app:Toeplitz}, we prove that our mirror map agrees locally with the one constructed in \cite{ChaLeuLiSueYau2026}. This appendix is logically independent of the main body of the paper and may be skipped by readers who are interested only in the mirror symmetry statement.

\subsection*{Acknowledgement} 
Both authors would like to thank Kwokwai Chan and Nai Chung Conan Leung for useful discussions and their interest in this work. The work of Y.-H. Suen was supported by the National Science and Technology Council (Project No. 113-2115-M-006-016-MY2) and partially supported by the Yushan Fellow Program by the Ministry of Education (MOE), Taiwan. (MOE-114-YSFMS-0005-001-P1). Y. Yau was supported by World Premier International Research Center Initiative (WPI), MEXT, Japan.

\section{Mirror symmetry of $A_n$-resolutions}
\label{Section 2}

This section reviews some basics on mirror symmetry of $A_n$-resolutions. We basically follow \cite{Cha2013} but with a slight change in notations and sign conventions in order to simplify a lot of sign issues in the later sections.

Let
$$X:=\{(x,y,z)\in\bb{C}^2\times\bb{C}^{\times}:xy=f(z)\},$$
where $f$ is a degree $n+1$ polynomial with $n+1$ distinct roots $a_0,\dots,a_n$ such that $|a_0| > \cdots > |a_n|$. Equip $X$ with the symplectic form $\omega$, obtained by restricting the standard symplectic form
$$\omega_{\std}:=\frac{\sqrt{-1}}{2}\left(dx\wedge d\ol{x}+dy\wedge d\ol{y}+\frac{dz\wedge d\ol{z}}{|z|^2}\right)$$
on $\bb{C}^2\times\bb{C}$ to $X$. It admits a proper Lagrangian fibration $p:X\to B:=\bb{R}^2$ given by
$$p(x,y,z):=\left(-\log|z|,\frac{1}{2}(|x|^2-|y|^2)\right).$$


We denote a general point in $B = \mathbb{R}^2$ by $(s, \lambda)$. This fibration has nodal singular fibers over the points $(s, \lambda) = (s_i,0)$, where $s_i:= -\log|a_i|$ for each $i=0,\dots,n$. In our setting, we have $s_0<\cdots<s_n$, and the base space $B$ is covered by the open subsets
$$B_i:=(s_{i-1},s_{i+1})\times\bb{R}, \quad i=0,\dots,n,$$
where $s_{-1}:=-\infty$ and $s_{n+1}:=+\infty$. The smooth locus $B^{sm} := B \backslash \{ (s_0, 0), ..., (s_n, 0) \}$ admits an integral affine structure induced by the above Lagrangian fibration, receiving monodromy around each point $(s_i, 0)$. To describe the monodromy, we further cover each $B_i \backslash \{ (s_i, 0) \}$ by two open subsets
\begin{align*}
U_i^-:=&\,B_i\backslash ((s_{i-1},s_i]\times\{0\}),\\
U_i^+:=&\,B_i\backslash ([s_i,s_{i+1})\times\{0\}).
\end{align*}
With respect to a set of affine coordinates $(x_{U_i^{\pm},1},x_{U_i^{\pm},2})$ on $U_i^{\pm}$, the counter-clockwise monodromy around each affine singularity is given by
\begin{equation}
    \label{Equation: monodromy matrix}
        \begin{pmatrix}
            1 & -1\\
            0 & 1
        \end{pmatrix}.
    \end{equation}
For notational convenience, we will often suppress the subscript $U_i^{\pm}$ indicating the choice of local chart when no confusion can arise. In particular, coordinates, local frames, and other local data will be denoted without this subscript except when discussing transition data on overlaps of open subsets.

Let $(y^1,y^2)$ denote the fiber coordinates of $X$ induced by the base coordinates $(x_1,x_2)$. In these local action-angle coordinates, the symplectic form on $X$ is locally given by
$$\omega =dx_1 \wedge dy^1+dx_2\wedge dy^2.$$

\subsection{The mirror manifold}

The mirror manifold of $(X,k\omega)$ is constructed as follows. Let
$$\check{p}^{(k)}:\check{X}^{(k)}_0:=TB^{sm}/ T_{\mathbb{Z}} B^{sm} \to B^{sm}$$
be the dual torus fibration. For each open subset $U \subset B^{sm}$, denote $(\check{p}^{(k)})^{-1}(U)$ by $\check{X}^{(k)}_U$. The total space $\check{X}^{(k)}_0$ is canonically diffeomorphic to $\check{X}_0 := TB^{sm}/ T_\mathbb{Z} B^{sm}$, and admits a canonical complex structure determined by the local complex coordinates on each $\check{X}^{(k)}_{U_i^{\pm}}$:
\begin{align*}
    (\check{z}_1, \check{z}_2) :=\, \left( x_1+\frac{\sqrt{-1}}{k}\check{y}_1, \,x_2+\frac{\sqrt{-1}}{k}\check{y}_2 \right),
\end{align*}
under the canonical identification $\check{X}^{(k)}_0 \cong \check{X}_0$ of the underlying smooth manifolds. Here $(\check{y}_1,\check{y}_2)$ denotes the fiber coordinates of $\check{X}_0$ induced by the base affine coordinates $(x_1,x_2)$. The resulting complex manifold $\check{X}^{(k)}_0$ is known as the \emph{semi-flat mirror manifold} of $(X, k\omega)$. On $U=U_i^{\pm}$, define
\begin{align*}
    (\check{u}, \check{w}) :=&\, (e^{2\pi k\check{z}^1}, e^{2\pi k\check{z}^2}).
\end{align*}
According to \eqref{Equation: monodromy matrix}, these complex coordinates have monodromy
\begin{equation}
    \label{eqn: monodromy on semi-flat mirror}
    \begin{dcases}
    \check{u}_{U_i^-}\mapsto\check{u}_{U_i^-}\check{w}_{U_i^-}^{-1}\\
    \check{u}_{U_i^+}\mapsto \check{u}_{U_i^+}\check{w}_{U_i^-}^{-1}\\
    \check{w}_{U_i^-}\mapsto \check{w}_{U_i^-}.
\end{dcases}
\end{equation}
Let $B^+ = \{ (s, \lambda) \in B: \lambda > 0 \}$ and $B^- = \{ (s, \lambda) \in B: \lambda < 0 \}$. Define
\begin{align*}
    B_i^\pm :=&\, B_i \cap B^\pm.
\end{align*}
Then $U_i^+\cap U_i^-=B_i^+\sqcup B_i^-$. The counter-clockwise monodromy \eqref{Equation: monodromy matrix} gives us the following semi-flat gluing
\begin{equation}\label{eqn:naive gluing}
    \begin{dcases}
    \check{u}_{U_i^+}=\check{u}_{U_i^-} & \text{ on } \check{X}_{B_i^+}^{(k)},\\
    \check{u}_{U_i^+}=\check{u}_{U_i^-}\check{w}_{U_i^-}^{-1} & \text{ on }\check{X}_{B_i^-}^{(k)},\\
    \check{w}_{U_i^+}=\check{w}_{U_i^-}=\check{w}_{U_{i+1}^-}=\check{w}_{U_{i+1}^+} & \text{ for all } i,\\
    \check{u}_{U_i^+}=\check{u}_{U_{i+1}^-} & \text{ for all } i.
\end{dcases}
\end{equation}

By studying the Floer theory of fibers of $p:X\to B$ \cite{Aur2007, Aur2009}, 
one finds that for each affine singularity $(s_i,0)$, there are two walls emitting out from it. They are given by $\{s_i\}\times\bb{R}_{>0},\{s_i\}\times\bb{R}_{<0}$, and in particular, all walls are parallel. The fiber over each point on the wall bounds Maslov index-zero holomorphic disks emitting out from the singular fiber. Such Floer-theoretic data are used to correct the gluing \eqref{eqn:naive gluing}, thereby producing a monodromy-free complex structure. The \emph{instanton-corrected mirror manifold} of $(X,k\omega)$ is obtained by modifying \eqref{eqn:naive gluing} with the corresponding \emph{wall-crossing factors}:
\begin{equation}\label{eqn: B gluing}
    \begin{dcases}
    \check{u}_{U_i^+}=\check{u}_{U_i^-} (1+\check{w}_{U_i^-}^{-1}),  & \text{ on } \check{X}_{B_i^+}^{(k)},\\
    \check{u}_{U_i^+}=\check{u}_{U_i^-}\check{w}_{U_i^-}^{-1}(1+\check{w}_{U_i^-}) & \text{ on } \check{X}_{B_i^-}^{(k)},\\
    \check{w}_{U_i^+}=\check{w}_{U_i^-}=\check{w}_{U_{i+1}^-}=\check{w}_{U_{i+1}^+} & \text{ for all }i,\\
    \check{u}_{U_i^+}=\check{u}_{U_{i+1}^-} & \text{ for all }i.
\end{dcases}
\end{equation}
Denote the resulting complex manifold by $\check{X}^{(k)}$ and the corresponding structure sheaf by $\cu{O}_{\check{X}^{(k)}}$. Note that $x_2 := x_{U_i^{\pm}, 2}$ and $\check{w}:=\check{w}_{U_i^{\pm}}$ are global functions of $\check{X}^{(k)}$, and we require that $x_2 = 0$ along the line $\lambda = 0$ in $B = \mathbb{R}^2$. This condition ensures that the coordinate function $\check{w}$ maps $\check{X}^{(k)}_{B^+}$ and $\check{X}^{(k)}_{B^-}$ into the two distinct connected components of $\{ a \in \mathbb{C}: | a | \neq 1\}$, 
thereby guaranteeing that the gluing relation \eqref{eqn: B gluing} is well defined (a geometric construction of $\check{w}$ is provided in \cite[page 10]{Cha2013}).

\begin{remark}
    The complex manifold $(\check{X}^{(k)},\cu{O}_{\check{X}^{(k)}})$ is isomorphic to the toric variety $X_{\Sigma}$ with fan $\Sigma$ generated by the rays $(0,1),(1,1),\dots,(n+1,1)$, which is a resolution of the $A_n$-singularity $\bb{C}^2/\bb{Z}_{n+1}$. See \cite{Cha2013}.
\end{remark}

\subsection{Coisotropic A-branes and SYZ transform}

Recall the following definition \cite{KapOrl2003, Gua2011}.

\begin{definition}
    A \emph{(rank 1) coisotropic A-brane} of a symplectic manifold $(M,\omega_M)$ is a coisotropic submanifold $C\subset M$ together with an $U(1)$-line bundle $(L,\nabla^L)$ such that
	\begin{itemize}
		\item [(a)] The curvature $-2\pi\sqrt{-1} F_{\nabla^L}$ vanishes on $\ker(\omega_M|_C)$. In particular, the $2$-form $F_{\nabla^L}$ descends to a map $F_{\nabla^L}:TC/\ker(\omega_M|_C)\to T^*C$.
		\item [(b)] The composition $I:=\omega_M^{-1}F_{\nabla^L}:TC/\ker(\omega_M|_C) \to TC/\ker(\omega_M|_C)$ defines a complex structure, i.e. $I^2=-\operatorname{Id}$.
	\end{itemize}
\end{definition}

Throughout this article, every coisotropic A-brane is assumed to be of \emph{rank} $1$ and supported on a \emph{closed} submanifold of the ambient symplectic manifold. We are mainly interested in coisotropic A-branes $(C, L, \nabla^L)$ of $(X, \omega)$ supported on non-Lagrangian submanifolds. By dimension reason, such an A-brane must be \emph{space-filling}, i.e. $C = X$. In this case, $X$ inherits a complex structure $I=\omega^{-1}F_{\nabla^L}$ from the brane structure, turning it into a holomorphic symplectic manifold with holomorphic symplectic form
$$\Omega_X:=F_{\nabla^{L}}+\sqrt{-1}\omega.$$
One can also scale the symplectic form $\omega$ by a positive integer $k\in\bb{Z}_{>0}$ and see that $(X,L^{\otimes k},\nabla^{L^{\otimes k}})$ is a coisotropic brane of $(X,k\omega)$.

In semi-flat SYZ framework (cf. \cite{ChaLeuZha2018, ChaLeuLiSueYau2026}), a B-brane is taken to be a Hermitian holomorphic vector bundle over a complex submanifold. For the purposes of this paper, however, we omit the Hermitian structure.

\begin{definition}
    \label{Definition: B-brane}
	A \emph{B-brane} of a complex manifold $\check{M}$ is a complex submanifold $\check{C}\subset\check{M}$ together with a holomorphic vector bundle $\check{E}$ on $\check{C}$.
\end{definition}

In \cite{ChaLeuZha2018}, Chan--Leung--Zhang introduced the notion of \emph{semi-affineness} of coisotropic $A$-branes and B-branes, and established a correspondence
$$\{\text{Semi-affine A-branes}\}\longleftrightarrow\{\text{Semi-affine B-branes}\}$$
via the family Nahm transform. Their construction applies more generally to higher-rank coisotropic A-branes and requires the corresponding B-branes to be equipped with Hermitian structures. In particular, a space-filling A-brane $\cu{B}_0 = (X_0, L_0, \nabla^{L_0})$ of $(X_0, \omega \vert_{X_0})$ is semi-affine if and only if its curvature $-2\pi\sqrt{-1} F_{\nabla^{L_0}}$ is a constant $2$-form along each fiber $F_x$ of $p \vert_{X_0}$, that is, in affine coordinates $(y^i)$ of $F_x$, there exist integers $h_{ij}\in\mathbb{Z}$ such that $F_{\nabla^{L_0}}|_{F_x}=\sum_{i,j}h_{ij}dy^i\wedge dy^j$. 

Applying the family Nahm transform to a semi-affine space-filling A-brane $\cu{B}_0$ produces not only a B-brane $\check{\cu{B}}_0 = (\check{C}_0, \check{E}_0)$ in the sense of Definition \ref{Definition: B-brane}, but also a unitary connection $\nabla^{\check{E}_0}$ on $\check{E}_0$ such that the triple $(\check{C}_0, \check{E}_0, \nabla^{\check{E}_0})$ satisfies the semi-affine condition of \cite{ChaLeuZha2018}. We call $\check{\cu{B}}_0$ the \emph{mirror B-brane} of $\cu{B}_0$.

Henceforth, we restrict our attention to the case when the mirror B-brane $\check{\cu{B}}_0$ is also space-filling, i.e. $\check{C}_0 = \check{X}_0$. This is equivalent to requiring that the restriction of $F_{\nabla^{L_0}}$ to every fiber of $p \vert_{X_0}: X_0 \to B^{sm}$ is non-degenerate \cite{ChaLeuZha2018}. This setting was studied in the joint work \cite{ChaLeuLiSueYau2026} with Chan, Leung and Li. In particular, it is shown that, after a suitable choice of local affine coordinates, the connection $\nabla^{L_0}$ can be written as
$$\nabla^{L_0}=d-2\pi\sqrt{-1} (x_1h^{-1}dx_2+(y^2+g^2(x))hd(y^1+g^1(x))),$$
for some $h\in\bb{Z}$ and closed 1-form $\alpha = g^1(x)dx_1 + g^2(x)dx_2$. Moreover, the integer $h$ is an invariant of the A-brane $\cu{B}_0$. As demonstrated in \cite[Example~3.3.3]{ChaLeuLiSueYau2026}, there exists a semi-flat space-filling A-brane on $X_0$ whose connection is given by the above expression with $h = 2$ and $\alpha = 0$. This local model will serve as the prototype for our construction away from the singular fibers. In Section \ref{sec: A brane}, we will modify the gluing of these local models in order to construct a space-filling A-brane on $X$.

For each positive integer $k\in\bb{Z}_{>0}$, accordingly, the same kind of constructions yields the mirror B-brane $\check{\cu{B}}_0^{(k)} = (\check{X}_0^{(k)}, \check{E}_0^{(k)})$ of $\cu{B}_0^{(k)} := (X_0, L_0^{\otimes k}, \nabla^{L_0^{\otimes k}})$, which is equpiped with a Hemritian structure. When $\alpha = 0$, the Chern connection $\nabla^{\check{E}_0^{(k)}}$ on the mirror bundle locally takes the form
$$\nabla^{\check{E}_0^{(k)}}=d-2\pi\sqrt{-1} \left(kx_1h^{-1}dx_2+\frac{1}{k}\check{y}_2h^{-1}d\check{y}_1\right)\Id_{\check{E}_0^{(k)}}$$
This formula will play a key role in Section \ref{sec:mirror B brane}, where we correct the mirror B-brane correspondingly.

\section{A space-filling A-brane on $X$ and its endomorphism algebra}
\label{sec: A brane}

The goal of this section is to construct a space-filling coisotropic A-brane of $(X, \omega)$ starting from a semi-affine one, and to present a mathematical model for its endomorphism algebra.

Subsection \ref{subsec:construction of Bcc} introduces a semi-affine coisotropic A-brane $\cu{B}_{cc, 0}$ of $(X_0, \omega |_{X_0})$, which already serves as a prototypical example within this class of objects. A key observation is that the monodromy of the complex structure $I_0$ on $X_0$, induced by $\cu{B}_{cc, 0}$, agrees with that of the semi-flat mirror of $X$. This suggests that $I_0$ can be corrected to a global complex structure on $X$. Subsection \ref{subsec:Construction of the space filling A-brane} promotes this correction to the level of the A-brane, thereby producing a space-filling coisotropic A-brane $\cu{B}_{cc}$ of $(X, \omega)$.

Associated with $\cu{B}_{cc, 0}$ is a non-formal deformation quantization constructed in \cite{ChaLeuLiSueYau2026}, which may be viewed as a mathematical realization of the endomorphism algebra of $\cu{B}_{cc, 0}$. In Subsection \ref{subsec:endomorphism algebra}, we incorporate the corresponding corrections into this deformation quantization, and propose that the resulting non-commutative algebra should be viewed as the endomorphism algebra of $\cu{B}_{cc}$, in accordance with our mirror theorem. In Subsection \ref{subsec:specialization of deformation quantization}, we discuss how this noncommutative algebra may be understood as a specialization of a deformation quantization.

\subsection{The semi-affine model}\label{subsec:construction of Bcc}

We begin with the following semi-affine space-filling A-brane
$$\cu{B}_{cc,0}:=(X_0,L_0,\nabla^{L_0})$$
on $X_0$, which extends the construction of \cite[Example 3.3.3]{ChaLeuLiSueYau2026} to the setting of multiple singular fibers. For an open subset $U \subset B^{sm}$, denote $p^{-1}(U)$ by $X_U$. For each chart $X_{U_i^{\pm}}$, $(L_0, \nabla^{L_0})$ is modelled by the $U(1)$-line bundle induced by the unitary automorphy factors
$$\tau(y,m):=\exp\left(4\pi\sqrt{-1}m^2y^1\right),$$
together with the $U(1)$-connection
\begin{equation}
  \label{eqn: local unitary connection}
  d-2\pi\sqrt{-1}\left(\frac{1}{2}x_1dx_2+2y^2dy^1\right).  
\end{equation}
The local models are glued by the gauge transformations
\begin{align*}
    g_{U_i^-U_i^+}(x_{U_i^-,2},y_{U_i^-}^1):=&\,\begin{dcases}
        1 & \text{ on }X_{B_i^+}\\
        \exp\left(2\pi\sqrt{-1}\left(\frac{1}{4}x_{U_i^-,2}^2-(y_{U_i^-}^1)^2\right)\right) & \text{ on }X_{B_i^-}
    \end{dcases},\\
    g_{U_{i+1}^-U_i^+}(x_{U_{i+1}^-},y_{U_{i+1}^-}):=&\,1,
\end{align*}
i.e. they satisfy the relation
\begin{align*}
    g_{U_i^-U_i^+}(x_{U_i^-,2},y_{U_i^-}^1+m_{U_i^-})\cdot\tau_{U_i^+}(y_{U_i^+}^1,m_{U_i^+})=&\,\tau_{U_i^-}(y_{U_i^-},m_{U_i^-})\cdot g_{U_i^-U_i^+}(x_{U_i^-,2},y_{U_i^-}^1),\\
    g_{U_{i+1}^-U_i^+}(x_{U_{i+1}^-,2},y_{U_{i+1}^-}^1+m_{U_{i+1}^-})\cdot\tau_{U_i^+}(y_{U_i^+}^1,m_{U_{i+1}^+})=&\,\tau_{U_{i+1}^-}(y_{U_{i+1}^-},m_{U_{i+1}^-})\cdot g_{U_{i+1}^-U_i^+}(x_{U_i^-,2},y_{U_i^-}^1),
\end{align*}
and identify the local unitary connections in \eqref{eqn: local unitary connection}.

Our goal is to modify the above gauge transformations as so to obtain a space-filling coisotropic A-brane of $(X, \omega)$. Since the existence of such a brane requires $X$ to admit a holomorphic symplectic structure, we first study the uncorrected complex structure $I_0:=\omega^{-1}F_{\nabla^{L_0}}$ on $X_0$ induced from the brane condition on $\cu{B}_{cc, 0}$. With respect to $I_0$, the manifold $X_0$ is equipped with the local complex coordinates
\begin{align*}
(z^1, z^2) :=\, (x_1-2\sqrt{-1}y^2, x_2+2\sqrt{-1}y^1).
\end{align*}
On $U_i^{\pm}$, the complex coordinates
\begin{align*}
    (u, w) := &\, (e^{\pi z^1}, e^{\pi z^2}).
\end{align*}
also receive monodromy induced by the affine change of coordinates, namely, around the affine singularity $s_i$, the counter-clockwise monodromy of these complex coordinates is given by
$$\begin{dcases}
    u_{U_i^-}\mapsto u_{U_i^-}w_{U_i^-}^{-1}\\
    u_{U_i^+}\mapsto u_{U_i^+}w_{U_i^-}^{-1}\\
    w_{U_i^{\pm}}\mapsto w_{U_i^{\pm}}.
\end{dcases}$$
Remarkably, this coincides precisely with the monodromy of the semi-flat mirror complex structure, as described in \eqref{eqn: monodromy on semi-flat mirror}! This motivates us to correct the gluing map on $X_{U_i^-\cap U_i^+}=X_{B_i^+}\sqcup X_{B_i^-}$ by
\begin{equation}\label{eqn: A gluing}
    \begin{dcases}
    u_{U_i^+}=u_{U_i^-} (1+w_{U_i^-}^{-1}),  & \text{ on } X_{B_i^+},\\
    u_{U_i^+}=u_{U_i^-} w_{U_i^-}^{-1}(1+w_{U_i^-}) & \text{ on } X_{B_i^-}\\
    w_{U_i^+}=w_{U_i^-}=w_{U_{i+1}^-}=w_{U_{i+1}^+} & \text{ for all }i\\
    u_{U_i^+}=u_{U_{i+1}^-} & \text{ for all }i,
\end{dcases}
\end{equation}
in analogy with the gluing rule \eqref{eqn: B gluing} on the B-side. We thus have constructed a complex structure $I$ on $X$. Denote the corresponding structure sheaf by $\cu{O}_X$. Note that $w:=w_{U_i^{\pm}}$ is a global function.

\begin{remark}
    The symplectic form $\omega$ is of type $(1, 1)$ with respect to the complex structure on $X$ inherited from $\mathbb{C}^2 \times \mathbb{C}^\times$, and of type $(2, 0) + (0, 2)$ with respect to $I$. Hence, these two complex structures are distinct. 
\end{remark}

\subsection{Construction of the space filling A-brane $\cu{B}_{cc}$}
\label{subsec:Construction of the space filling A-brane}
We now show that the local model of $(L_0,\nabla^{L_0})$ over $X_{U_i^{\pm}}$ can be glued to a $U(1)$-line bundle $(L,\nabla^L)\to X$ with respect to the corrected coordinate transformations in \eqref{eqn: A gluing}. Recall that $e^{2\pi\sqrt{-1}y^1}$ is a global function on $X$ and for each chart $X_{U_i^\pm}$, the unitary automorphy factor of $L_0$ is given by
\begin{equation}\label{eqn:factor of autopmorphy +}
    \tau(y,m) =\exp(4\pi\sqrt{-1}m^2y^1).
\end{equation}
Note that $U_i^-\cap U_i^+ = B_i^+\sqcup B_i^-$. In terms of the coordinates on $U_i^-$, we have
$$\begin{dcases}
    m_{U_i^+}^2=m_{U_i^-}^2 & \text{ on } X_{B_i^+},\\
    m_{U_i^+}^2=m_{U_i^-}^2+m_{U_i^-}^1 & \text{ on } X_{B_i^-},
\end{dcases}$$
and so, in terms of the coordinates on $X_{U_i^-}$, the automorphy factor of $L_0|_{X_{U_i^+}}$ is given by
$$\tau_{U_i^+}(y_{U_i^+}(y_{U_i^-}),m_{U_i^+}(m_{U_i^-}))=\begin{dcases}
    \exp(4\pi\sqrt{-1}m_{U_i^-}^2y^1) & \text{ on } X_{B_i^+},\\
    \exp(4\pi\sqrt{-1}(m_{U_i^-}^2+m_{U_i^-}^1)y^1) & \text{ on } X_{B_i^-}.
\end{dcases}$$
We notice that we can obtain a gauge transformation expressed in terms of the dilogarithm function
$$\Li_2(z):=-\int_0^z\frac{\log(1-t)}{t}dt.$$
To see this, recall the modified gluing on $X_{B_i^+}$:
$$\begin{dcases}
    x_{U_i^+,1}=x_{U_i^-,1}+\frac{1}{\pi}\log|1+w^{-1}|,\\
    y_{U_i^+}^2=y_{U_i^-}^2-\frac{1}{2\pi}\arg(1+w^{-1}).
\end{dcases}$$
It then follows from \eqref{eqn: local unitary connection} that
\begin{align*}
  \nabla^{L_0}|_{X_{U_i^+}}-\nabla^{L_0}|_{X_{U_i^-}}=& \frac{\sqrt{-1}}{\pi}\left(\log|1+w^{-1}|d\log|w^{-1}|-\arg(1+w^{-1})d\arg(w^{-1})\right)\\
  =& -\frac{\sqrt{-1}}{\pi}d\text{Re}\left(\Li_2(-w^{-1})\right),  
\end{align*}
where we used the identity
$$\frac{d}{dw}\Li_2(-w^{-1})=\frac{\log(1+w)-\log(w)}{w}.$$

On $X_{B_i^+}$, $\log|w|=\pi x_2>0$, which is equivalent to $|w|>1$. As $y^1\mapsto y^1+m^1$, the dilogarithm function receive no monodromy:
$$\Li_2(-w^{-1})\mapsto \Li_2(-w^{-1})$$
and thus defines a function on $X_{B_i^+}$. Similarly, on $X_{B_i^-}$, where $|w|<1$, we have
$$\frac{d}{dw}\Li_2(-w)=-\frac{\log(1+w)}{w}.$$
This implies
$$\Li_2(-w)\mapsto \Li_2(-w)$$
when $y^1\mapsto y^1+m^1$. Using this, we see that
$$G_{U_i^-U_i^+}(z^2):=\begin{dcases}
    \exp\left(-\frac{\sqrt{-1}}{\pi}\text{Re}\left(\Li_2(-w^{-1})\right)\right)  & \text{ on }X_{B_i^+},\\
    \exp\left(2\pi\sqrt{-1}\left(\frac{1}{4}x_{U_i^-,2}^2-(y_{U_i^-}^1)^2\right)\right)\exp\left(-\frac{\sqrt{-1}}{\pi}\text{Re}\left(\Li_2(-w)\right)\right) & \text{ on }X_{B_i^-},
\end{dcases}$$
gives a gauge equivalence between the automorphy factors of $L_0|_{X_{U_i^-}}$ and $L_0|_{X_{U_i^+}}$ on $X_{U_i^+\cap U_i^-}$, i.e.
$$G_{U_i^-U_i^+}(z^2+2\sqrt{-1}m^1)\cdot \tau_{U_i^+}(y_{U_i^+},m_{U_i^+})=\tau_{U_i^-}(y_{U_i^-},m_{U_i^-})\cdot G_{U_i^-U_i^+}(z^2).$$
A gauge equivalence between $(L_0|_{X_{U_i^+}},\nabla^{L_0}|_{X_{U_i^+}})$ and $(L_0|_{X_{U_{i+1}^-}},\nabla^{L_0}|_{X_{U_{i+1}^-}})$ trivially holds by taking $$G_{U_{i+1}^-U_i^+}\equiv 1 \quad \text{for all } i.$$


Thus, we successfully obtain a $U(1)$-bundle $(L,\nabla^L)\to X$. Its curvature is given by $-2\pi\sqrt{-1} F_{\nabla^L}$, where
$$F_{\nabla^L}:=\frac{1}{2}dx_1\wedge dx_2-2dy^1\wedge dy^2.$$
Together with the symplectic form $\omega$, the following defines a holomorphic symplectic form on $X$:
$$\Omega_X:=F_{\nabla^L}+\sqrt{-1}\omega=\frac{1}{2}d\left(x_1-2\sqrt{-1}y^2\right)\wedge d\left(x_2+2\sqrt{-1}y^1\right).$$
Hence, $\cu{B}_{cc}:=(X,L,\nabla^L)$ is a space-filling coisotropic $A$-brane of $(X, \omega)$. 
Consequently, for each $k \in \mathbb{Z}_{>0}$, $\cu{B}_{cc}^{(k)}:=(X,L^{\otimes k},\nabla^{L^{\otimes k}})$ is a space-filling coisotropic $A$-brane of $(X,k\omega)$.

\subsection{The endomorphism algebra of $\cu{B}_{cc}^{(k)}$}
\label{subsec:endomorphism algebra}

This subsection is devoted to a proposed construction of the endomorphism algebra $\Hom_A(\cu{B}_{cc}^{(k)},\cu{B}_{cc}^{(k)})$ for each $k \in \mathbb{Z}_{>0}$. In the previous joint work \cite{ChaLeuLiSueYau2026}, this algebra was realized as a non-formal holomorphic deformation quantization of $(X, \Omega_X)$ in the semi-flat setting, obtained by gluing analytic quantum tori. The construction presented here follows the same spirit, with wall-crossing factors incorporated into the gluing data.

We introduce the new parameter $q:=e^{-\frac{\pi\sqrt{-1}}{k}}$ and for any rational number $r\in\bb{Q}$, we write $q^r:=e^{-\frac{r\pi\sqrt{-1}}{k}}$. Note that $q$ is a primitive $2k$-th root of unity, where $2k$ is precisely the rank of the mirror B-brane of $\cu{B}_{cc}^{(k)}$, to be constructed in the next section. Define the skew-symmetric pairing $\{-,-\}:\bb{Z}^2\times\bb{Z}^2\to\bb{Q}$ by
$$\{m,m'\}:=\frac{1}{2}(m^1m'^2-m'^1m^2),$$
where we write $m = (m^1, m^2)$ and $m' = (m'^1, m'^2)$. 
For holomorphic functions $f,g\in H^0(X_{U_i^{\pm}},\cu{O}_X)$, we expand them as Fourier series
\begin{equation}
    \label{eqn: Fourier expansion}
    f(u, w) = \sum_{m \in \mathbb{Z}^{2n}} \widehat{f}_m u^{m^1} w^{m^2}, \quad g(u, w) = \sum_{m \in \mathbb{Z}^{2n}} \widehat{g}_m u^{m^1} w^{m^2}
\end{equation}
and define
\begin{equation*}
    f\star_{k^{-1}}g:=\sum_{m,m'\in\bb{Z}^2} q^{\{m,m'\}}  \widehat{f}_m\widehat{g}_{m'} u^{m^1+m'^1} w^{m^2+m'^2}.
\end{equation*}
This product gives a non-commutative algebra $(H^0(X_{U_i^{\pm}},\cu{O}_X),\star_{k^{-1}})$, which is an analytic quantum torus. We glue these quantum tori together via the following gluing formulae
\begin{equation}\label{eqn:quantum wall crossing}
    \begin{dcases}
    u_{U_i^+}=u_{U_i^-} (1+w^{-2k})^{\frac{1}{2k}} & \text{ on } X_{B_i^+},\\
    u_{U_i^+}=u_{U_i^-} w^{-1}(1+w^{2k})^{\frac{1}{2k}} & \text{ on } X_{B_i^-},\\
    w_{U_i^+}=w_{U_i^-}=w_{U_{i+1}^-}=w_{U_{i+1}^+} & \text{ for all }i,\\
    u_{U_i^+}=u_{U_{i+1}^-} & \text{ for all }i.
\end{dcases}
\end{equation}
Here, we have chosen the principal branch of the $2k$-th root function, i.e. the branch for which $1^{\frac{1}{2k}}=1$. Since
$$\begin{dcases}
    \log(1+w^{-2k})\mapsto\log(1+w^{-2k}) & \text{ on }X_{B_i^+},\\
    \log(1+w^{2k})\mapsto\log(1+w^{2k}) & \text{ on }X_{B_i^-},
\end{dcases}$$
as $y^1\mapsto y^1+m^1$, both factors $(1+w^{\pm 2k})^{\frac{1}{2k}}$ are well-defined functions. Note that on the overlap $U_i^-\cap U_i^+$, $\log|w|=2\pi x_2\neq 0$, so $|w|\neq 1$, and hence the factors $(1+w^{\pm 2k})^{\frac{1}{2k}}$ are invertible. Moreover,
$$u_{U_i^+}=u_{U_i^-} w^{-1}(1+w^{2k})^{\frac{1}{2k}}=u_{U_i^-} (1+w^{-2k})^{\frac{1}{2k}}.$$
Thus \eqref{eqn: A gluing} gives us a sheaf of monodromy-free algebra $\cu{A}_X^{(k)}$ over $B$ so that $H^0(U_i^{\pm},\cu{A}_X^{(k)})=H^0(X_{U_i^{\pm}},\cu{O}_X)$.

\begin{remark}
    Any holomorphic function whose nonzero Fourier modes in the expansion \eqref{eqn: Fourier expansion} are all multiples of $2k$ lies in the center of the algebra $(H^0(X_{U_i^{\pm}}, \mathcal{O}_X), \star_{k^{-1}})$. In particular, $(1+w^{\pm 2k})^{\frac{1}{2k}}$ is a central element of the quantum tori, so it makes no difference whether one writes $u_{U_i^-} (1+w^{-2k})^{\frac{1}{2k}}$ or $u_{U_i^-} \star_{k^{-1}}(1+w^{-2k})^{\frac{1}{2k}}$.
\end{remark}

If we choose a different branches of the $2k$-th root function, they differ by a multiple of a $2k$-root of unity $\zeta$. The modified gluing is defined to be
\begin{equation}\label{eqn:quantum wall crossing2}
    \begin{dcases}
    u_{U_i^+}=u_{U_i^-} \zeta(1+w^{-2k})^{\frac{1}{2k}} & \text{ on } X_{B_i^+},\\
    u_{U_i^+}=u_{U_i^-} w^{-1}\zeta(1+w^{2k})^{\frac{1}{2k}} & \text{ on } X_{B_i^-},\\
    w_{U_i^+}=w_{U_i^-}=w_{U_{i+1}^-}=w_{U_{i+1}^+} & \text{ for all }i,\\
    u_{U_i^+}=\zeta u_{U_{i+1}^-} & \text{ for all }i.
\end{dcases}
\end{equation}
Then the maps $\varphi_{U_i^{\pm}}$ given by
$$\varphi_{U_i^+}:\begin{dcases}
    u_{U_i^+}\mapsto u_{U_i^+},\\
    w\mapsto w,
\end{dcases}\, \quad \text{and} \quad \,
\varphi_{U_i^-}:\begin{dcases}
    u_{U_i^-}\mapsto\zeta^{-1} u_{U_i^-},\\
    w\mapsto w,
\end{dcases}$$
provide an isomorphism between the sheaves of algebras that obtained from the gluing \eqref{eqn:quantum wall crossing}, \eqref{eqn:quantum wall crossing2}. Thus, the sheaf of algebras $\cu{A}_X^{(k)}$ is independent of the choice of branch of the $2k$-th root function. We propose that
$$\Hom_A(\cu{B}_{cc}^{(k)},\cu{B}_{cc}^{(k)}):=(H^{\bullet}(B,\cu{A}_X^{(k)}),\star_{k^{-1}}).$$

\begin{remark}
    A notable feature of the wall-crossing factors is the appearance of the parameter $2k$. Since $q$ is a $2k$-th root of unity, the $q$-quantum torus may be viewed heuristically as a $2k$-th root of the Laurent polynomial ring (the classical torus). In the same spirit, the gluing \eqref{eqn:quantum wall crossing} may be regarded as a $2k$-th root of the classical gluing \eqref{eqn: A gluing}. At present, however, this heuristic interpretation lacks a purely symplectic-geometric explanation, and its precise geometric significance remains an interesting direction for future investigation.
\end{remark}

\begin{remark}
\label{remark: limit of A_X}
As $k\to\infty$, we have $(1+w^{\pm 2k})^{\frac{1}{2k}} \to 1$. Consequently, the sheaf $\cu{A}_X^{(k)}$ degenerates to $\iota_*p_{0*}\cu{O}_{X_0}$ rather than $p_*\cu{O}_X$, where $\cu{O}_{X_0}$ denotes the structure sheaf of $(X_0,I_0)$ and $\iota:B^{\mathrm{sm}}\hookrightarrow B$ is the inclusion. This is because the quantum parameter $\frac{1}{k}$ corresponds to the rescaled symplectic form $k\omega$. Thus, the limit $k\to\infty$ is precisely the large-volume limit, in which the symplectic area of every non-constant holomorphic disc tends to infinity. Consequently, all instanton corrections are exponentially suppressed, and the instanton-corrected complex structure $I$ reduces to its semi-flat counterpart in the limit.
\end{remark}

Let $\iota:B^{sm}\hookrightarrow B$ be the inclusion map. By the construction of $\cu{A}_X^{(k)}$, we have

\begin{lemma}\label{lem:A isomorphism}
    We have $H^q(B^{sm},\iota^{-1}\cu{A}_X^{(k)})=H^q(B,\cu{A}_X^{(k)})$, for all $q\geq 0$.
\end{lemma}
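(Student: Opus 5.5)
The plan is to compare the two cohomologies through the Leray spectral sequence for $\iota:B^{sm}\hookrightarrow B$. First, by construction $\cu{A}_X^{(k)}$ is determined by its local models on the sets $U_i^{\pm}$, and these cover only $B^{sm}$. So I will regard $\cu{A}_X^{(k)}$ as $\iota_*$ of the sheaf on $B^{sm}$ obtained by gluing. In other words, its sections over an open $V\subset B$ are the glued sections over $V\cap B^{sm}$. This gives $\cu{A}_X^{(k)}=\iota_*\iota^{-1}\cu{A}_X^{(k)}$, and in particular the case $q=0$ is immediate.

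For general $q$, the Leray spectral sequence $H^p(B,R^q\iota_*\iota^{-1}\cu{A}_X^{(k)})\Rightarrow H^{p+q}(B^{sm},\iota^{-1}\cu{A}_X^{(k)})$ reduces the lemma to the vanishing $R^q\iota_*\iota^{-1}\cu{A}_X^{(k)}=0$ for $q\geq 1$. The products $\star_{k^{-1}}$ play no role here, so everything can be done with sheaves of vector spaces. Away from the points $(s_i,0)$ the stalks vanish trivially. At $(s_i,0)$ the stalk is $\varinjlim_D H^q(D^*,\cu{A}_X^{(k)})$, where $D$ runs over small discs around $(s_i,0)$, and we take cofinally $D=(s_i-\epsilon,s_i+\epsilon)\times(-\epsilon,\epsilon)$. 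The punctured disc $D^*$ is covered by $D^{\pm}:=D\cap U_i^{\pm}$, whose intersection is $(D\cap B_i^+)\sqcup(D\cap B_i^-)$. Each of these pieces is convex in the affine coordinates $(x_1,x_2)$ of its chart.

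The first key step is acyclicity of the local pieces: for convex $V\subset U_i^{\pm}$, $H^q(V,\cu{A}_X^{(k)})=0$ for $q\geq 1$. The sheaf on $V$ is $p_*\cu{O}$ of the tube-type domain $X_V\subset(\bb{C}^\times)^2$, described in the coordinates $(u,w)$ as the log-convex Reinhardt domain over $\pi V$. I would obtain the vanishing from the fact that $X_{V'}$ is Stein for every convex $V'\subset V$, combined with Čech computations on a basis of convex sets; equivalently, one can use the Fourier expansion \eqref{eqn: Fourier expansion}, where the problem reduces to Laurent coefficient estimates. Granting this, $H^q(D^*,\cu{A}_X^{(k)})$ is the cohomology of the two-term Čech complex
$$\cu{A}(D^+)\oplus\cu{A}(D^-)\to\cu{A}(D\cap B_i^+)\oplus\cu{A}(D\cap B_i^-).$$
It therefore vanishes automatically for $q\geq 2$, and it remains to show that $H^1$ vanishes.

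For $H^1$, I would decompose by the $u$-degree $m^1$. The gluing \eqref{eqn:quantum wall crossing} only multiplies $u$ by a central function of $w$, which is invertible on the overlaps, so each Fourier component $u^{m^1}g(w)$ is preserved up to the factor $(1+w^{\mp2k})^{m^1/2k}$ or $w^{-m^1}(1+w^{2k})^{m^1/2k}$. For fixed $m^1$ this turns the surjectivity question into an additive Cousin problem in the variable $w$. Over $D^{\pm}$, $w$ ranges over a full annulus $\{e^{-\pi\epsilon}<|w|<e^{\pi\epsilon}\}$. Over $D\cap B_i^{+}$ and $D\cap B_i^{-}$ it ranges over the outer and inner half-annuli respectively, and these are separated by $|w|=1$. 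Given data $(h_+,h_-)$ on the two half-annuli, I would first absorb $h_-$, after multiplying it by the relevant invertible wall-crossing factor, into the $D^-$ chart, reducing to a single function on the outer half-annulus. This is then split as a difference of a function on the $D^+$ annulus and one on the $D^-$ annulus via Laurent decomposition, that is, a Cauchy integral along a circle $|w|=\mathrm{const}>1$. One must also track the dependence on $x_1$, which through $|u|$ just shrinks the allowed radius in $u$, and the summability over $m^1$. Shrinking $D$ in the colimit absorbs any loss of domain.

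I expect the main obstacle to be this last step: obtaining uniform estimates in $m^1$ so that the componentwise Cousin solutions resum to genuine holomorphic functions. The wall-crossing factors $(1+w^{\pm2k})^{m^1/2k}$ grow geometrically in $m^1$. They stay bounded only after restricting to slightly smaller annuli, and this is exactly where passing to the colimit over $D$ is needed.
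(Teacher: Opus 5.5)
Your $q=0$ argument is fine, and it is exactly what the paper's one-line proof asserts: unique extension of local sections means $\cu{A}_X^{(k)}=\iota_*\iota^{-1}\cu{A}_X^{(k)}$. The gap is in degree $q\geq 1$. The vanishing $R^1\iota_*\iota^{-1}\cu{A}_X^{(k)}=0$, on which your Leray reduction rests, is false. Your Cousin argument breaks at the step ``absorb $h_-$ into the $D^-$ chart''.

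Here is why. The gluing \eqref{eqn:quantum wall crossing} preserves $u$-degree. So the degree-zero part $\cu{F}_0$ (functions of the global coordinate $w$ alone, on which every gluing map is the identity) is a direct summand of $\iota^{-1}\cu{A}_X^{(k)}$. The projection onto it commutes with restrictions and gluings.

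The sets $D^{\pm}$ are not convex. $D^-$ is $D$ with the slit $(s_i-\epsilon,s_i]\times\{0\}$ removed, so it still contains $(s_i,s_i+\epsilon)\times\{0\}$, over which $|w|=1$. Likewise $D^+\supset(s_i-\epsilon,s_i)\times\{0\}$. Hence a section of $\cu{F}_0$ over either $D^+$ or $D^-$ is a holomorphic function on the full annulus $A_\epsilon=\{e^{-\pi\epsilon}<|w|<e^{\pi\epsilon}\}$, whereas $h_\pm$ live only on the half-annuli $A_\epsilon^{\pm}$. So $h_-$ cannot be absorbed into the $D^-$ chart. Moreover, the Laurent splitting of a function on $A_\epsilon^+$ produces a summand holomorphic only on $\{|w|>1\}$, which is a section over neither chart.

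Concretely, the cocycle $(h_+,h_-)=(1,0)$ is never a coboundary. Projecting a putative primitive $(f_+,f_-)$ to degree zero gives $g\in\cu{O}(A_\epsilon)$ with $g=1$ on $A_\epsilon^+$ and $g=0$ on $A_\epsilon^-$, contradicting the identity theorem. \v{C}ech $H^1$ of any cover injects into sheaf $H^1$, with no acyclicity needed. The same argument works for every smaller $D$, so this class survives in $\varinjlim_D H^1(D^*,\iota^{-1}\cu{A}_X^{(k)})$. In fact the full degree-zero contribution is $\varinjlim_\epsilon \cu{O}(A_\epsilon\setminus\{|w|=1\})/\cu{O}(A_\epsilon)$, i.e.\ Sato hyperfunctions on the circle $|w|=1$. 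The obstruction has nothing to do with uniformity in $m^1$.

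This cannot be patched. On $B$, $\iota_*\cu{F}_0$ is the pullback along the global affine coordinate $x_2$ (a multiple of $\lambda$) of the sheaf $\cu{G}$ on $\bb{R}$ given by $\cu{G}(J)=\cu{O}(\{\tfrac{1}{\pi}\log|w|\in J\})$. The map $x_2\colon\bb{R}^2\to\bb{R}$ has fibres $\bb{R}$, and $H^1(\bb{R},\cu{G})=H^1(\bb{C}^{\times},\cu{O})=0$. Hence $H^1(B,\iota_*\cu{F}_0)=H^2(B,\iota_*\cu{F}_0)=0$.

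The five-term Leray sequence then gives $H^1(B^{sm},\cu{F}_0)\cong\bigoplus_i(R^1\iota_*\cu{F}_0)_{(s_i,0)}\neq 0$. So the restriction map $H^1(B,\cu{A}_X^{(k)})\to H^1(B^{sm},\iota^{-1}\cu{A}_X^{(k)})$ is not surjective. As formulated, the statement holds for $q=0$ but fails for $q=1$, so no choice of estimates will complete your argument. The paper's own justification also only establishes $\cu{A}_X^{(k)}=\iota_*\iota^{-1}\cu{A}_X^{(k)}$. It says nothing about $R^q\iota_*$ for $q\geq 1$, so it does not supply the missing ingredient either.
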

\begin{proof}
    This is because a $T_\mathbb{Z}^* B^{sm}$-invariant local holomorphic function on $X_0$ that respects the gluing \eqref{eqn:quantum wall crossing} extends uniquely to a local $T_\mathbb{Z}^* B^{sm}$-invariant section of $\cu{A}_X^{(k)}$.
\end{proof}

This allows us to compute $H^{\bullet}(B,\cu{A}_X^{(k)})$ by C\v{e}ch cohomology. More precisely, we refine the open sets $U_i^{\pm}$ by decomposing each of them into a union of three convex open subsets $U_{i,j}^{\pm}$, $j=1,2,3$, with the corresponding gluing maps as shown in Figure \ref{fig:refined_charts}. Then $\{U_{i,j}^{\pm}\}$ forms an acyclic cover for $\iota^{-1}\cu{A}_X^{(k)}$ because $X_{U_{i,j}^{\pm}}$ and their intersections are all Stein. Hence
$$H^q(B,\cu{A}_X^{(k)})=\check{H}^q(\{U_{i,j}^{\pm}\},\iota^{-1}\cu{A}_X^{(k)}).$$
This will be useful in proving our mirror theorem.
\begin{figure}[H]
    \centering
    \includegraphics[width=0.6\linewidth]{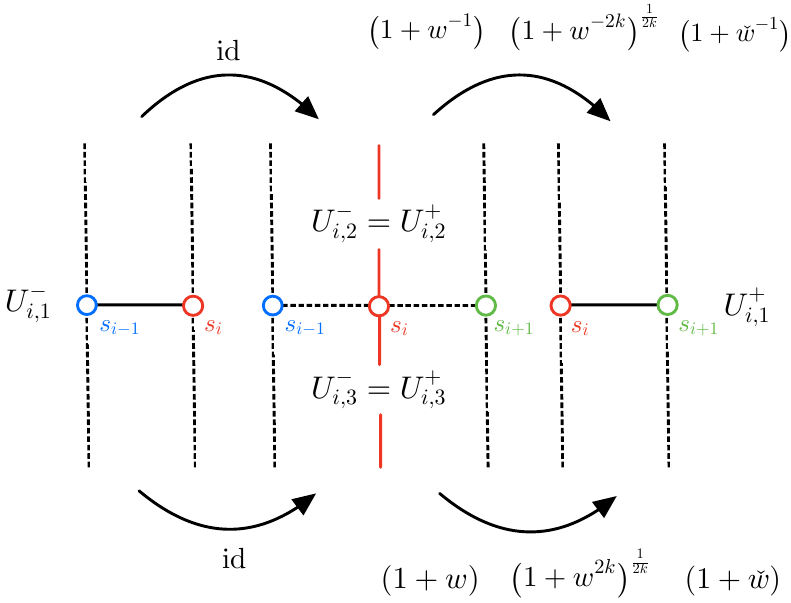}
    \caption{Refining $U_i^{\pm}$ into three convex open subsets and their corresponding gluing maps.}
    \label{fig:refined_charts}
\end{figure}

\subsection{$\cu{A}_X^{(k)}$ as a specialization of deformation quantization}
\label{subsec:specialization of deformation quantization}

As observed in Remark \ref{remark: limit of A_X}, the sheaf $\cu{A}_X^{(k)}$ does not asymptotically recover $p_*\cu{O}_X$ as $k \to \infty$. Consequently, it does not immediately resemble a deformation quantization of $\cu{O}_X$. Moreover, the appearance of the factor $(1+w^{\pm 2k})^{\frac{1}{2k}}$ prevents one from simply replacing $\frac{1}{k}$ by a formal parameter to obtain a deformation quantization in the manner of \cite{ChaLeuLiSueYau2026}. Nevertheless, in this subsection we show that, for each level $k\in\bb{Z}_{>0}$, there exists a strict deformation quantization $\cu{A}_{X,\hbar}^{(k)}$ of $(X,\Omega_X)$ in the sense of Definition \ref{def:relative DQ} satisfying $\cu{A}_{X,-\frac{\pi}{k}}^{(k)}=\cu{A}_X^{(k)}$, i.e. $\cu{A}_X^{(k)}$ is a specialization of a deformation quantization at $\hbar=-\frac{\pi}{k}$.

Since we are dealing with ``semi-local" functions in the sense that they are defined on $X_U\subset X$, it is natural to formulate deformation quantization relative to the base $B$.

\begin{definition}\label{def:relative DQ}
    Let $(X,\Omega_X)$ be a holomorphic symplectic manifold and suppose $p:X\to B$ be a continuous map. A sheaf of \emph{deformation quantizations of $(X,\Omega_X)$ relative to $B$} is a sheaf of $\bb{C}[[\hbar]]$-algebras $\cu{A}_{X,\hbar}$ over $B$ together with a surjective algebra homomorphism $\cu{A}_{X,\hbar}\to p_*\cu{O}_X$ such that for any small enough open set $U\subset B$, the algebra homomorphism $\cu{A}_{X,\hbar}(U)\to\cu{O}_X(X_U)$ defines a deformation quantization of $(X_U,\Omega_X|_{X_U})$. A sheaf of \emph{deformation quantizations of $(X,\Omega_X)$ relative to $B$} is called \emph{strict} if for any open set $U\subset B$, $\cu{A}_{X,\hbar}(U)\to\cu{O}_X(X_U)$ is a strict quantization.
\end{definition}

Let $\ul{q}:=e^{\sqrt{-1}\hbar}$. We construct $\cu{A}_{X,\hbar}^{(k)}$ via the following gluing of $\ul{q}$-quantum tori:
\begin{equation}\label{eqn:q-dilog gluing}
    \begin{dcases}
    u_{U_i^+}=u_{U_i^-}\star_{\ul{q}}(1+\ul{q}^{\frac{1}{2}}w^{-1})\frac{h_+(\ul{q}w^{-1})}{h_+(w^{-1})} & \text{ on } X_{B_i^+},\\
    u_{U_i^+}=u_{U_i^-}w^{-1}\star_{\ul{q}}(1+\ul{q}^{-\frac{1}{2}}w)\frac{h_-(\ul{q}^{-1}w)}{h_-(w)} & \text{ on } X_{B_i^-},\\
    w_{U_i^+}=w_{U_i^-}=w_{U_{i+1}^-}=w_{U_{i+1}^+} & \text{ for all }i,\\
    u_{U_i^+}=u_{U_{i+1}^-} & \text{ for all }i,
\end{dcases}
\end{equation}
where
$$h_{\pm}(w):=\prod_{r=0}^{2k-1}(1+\ul{q}^{\pm(\frac{1}{2}+r)}w)^{-\frac{r}{2k}}.$$
One can easily check that \eqref{eqn:q-dilog gluing} gives a monodromy free sheaf of algebras over $B$. Clearly, when $\hbar=0$, equivalently, $\ul{q}=1$, we get back the gluing \eqref{eqn: A gluing} of $\cu{O}_X$. If we treat $\hbar$ as a formal parameter, modulo $\hbar$ gives a surjective map
$$\cu{A}_{X,\hbar}^{(k)}\to p_*\cu{O}_X$$
so that $\cu{A}_{X,\hbar}^{(k)}/\hbar\cu{A}_{X,\hbar}^{(k)}\cong p_*\cu{O}_X$. Thus $\cu{A}_{X,\hbar}^{(k)}$ is a sheaf of relative deformation quantizations of $(X,\Omega_X)$ relative to $B$ for every $k\in\bb{Z}_{>0}$.

The function $h$ is defined so that we have the following

\begin{lemma}\label{lem:local deformation}
    At $\ul{q}=q=e^{-\frac{\pi\sqrt{-1}}{k}}$, we have
    \begin{align*}
        (1+q^{\frac{1}{2}}w^{-1})\frac{h_+(qw^{-1})}{h_+(w^{-1})}=&\,(1+w^{-2k})^{-\frac{1}{2k}},\\
        (1+q^{-\frac{1}{2}}w)\frac{h_-(q^{-1}w)}{h_-(w)}=&\,(1+w^{2k})^{-\frac{1}{2k}}.
    \end{align*}
\end{lemma}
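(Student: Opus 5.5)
The plan is a direct telescoping computation that uses two facts: $q^{2k}=1$, and the exponents $-\frac{r}{2k}$ in $h_\pm$ increase in steps of $\frac{1}{2k}$. I treat the first identity in detail, with $v:=w^{-1}$ and $\ul{q}=q=e^{-\frac{\pi\sqrt{-1}}{k}}$, $q^{\frac12}=e^{-\frac{\pi\sqrt{-1}}{2k}}$. The second identity follows by the same argument with $q\mapsto q^{-1}$ and $w^{-1}\mapsto w$. The computation below does not return the exponent stated in the lemma; as far as I can see, that discrepancy lies in the statement and not in the method.

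\emph{Step 1 (reindexing).} Write
$$h_+(qv)=\prod_{r=0}^{2k-1}(1+q^{\frac32+r}v)^{-\frac{r}{2k}}=\prod_{s=1}^{2k}(1+q^{\frac12+s}v)^{-\frac{s-1}{2k}}.$$
Since $q^{2k}=1$, the factor with $s=2k$ equals $(1+q^{\frac12}v)^{-\frac{2k-1}{2k}}$.

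\emph{Step 2 (ratio).} Compare with $h_+(v)$ factor by factor. For $1\le s\le 2k-1$ the exponent of $(1+q^{\frac12+s}v)$ in $h_+(qv)/h_+(v)$ is $-\frac{s-1}{2k}+\frac{s}{2k}=\frac{1}{2k}$. The factor $(1+q^{\frac12}v)$ has exponent $-\frac{2k-1}{2k}$, because the $r=0$ factor of $h_+(v)$ has exponent $0$. Multiplying by the prefactor $(1+q^{\frac12}v)$ raises this last exponent to $\frac{1}{2k}$, so
$$(1+q^{\frac12}v)\frac{h_+(qv)}{h_+(v)}=\prod_{s=0}^{2k-1}(1+q^{\frac12+s}v)^{\frac{1}{2k}}.$$

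\emph{Step 3 (roots of $-1$).} The numbers $\zeta_s=q^{\frac12+s}$, $0\le s\le 2k-1$, are exactly the $2k$ roots of $\zeta^{2k}=q^k=-1$. Hence
$$\prod_s(1+\zeta_s v)=v^{2k}\prod_s(-v^{-1}-\zeta_s)=v^{2k}\bigl(v^{-2k}+1\bigr)=1+v^{2k}.$$
So the $2k$-th powers of the two sides agree.

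\emph{Step 4 (branches).} Each factor $(1+\zeta_s v)^{\frac1{2k}}$ is taken as in the definition of $h_+$. I would first fix a determination of the logarithms on the relevant region ($|w|>1$ on $X_{B_i^+}$). The product of the principal roots and the principal $(1+v^{2k})^{\frac1{2k}}$ then differ by a constant $2k$-th root of unity. The constant is $1$, as one sees by letting $v\to0$, where both sides tend to $1$ and the region is connected.

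The main obstacle is sign and branch bookkeeping. Here a sanity check at $k=1$ is decisive, and I would run it first. With $q=-1$ and $q^{\frac12}=-\sqrt{-1}$ one gets $h_+(v)=(1+\sqrt{-1}v)^{-\frac12}$, and
$$(1-\sqrt{-1}v)\frac{(1-\sqrt{-1}v)^{-\frac12}}{(1+\sqrt{-1}v)^{-\frac12}}=(1+v^2)^{\frac12}.$$
So the literal computation gives $(1+w^{-2k})^{+\frac1{2k}}$. This is the exponent that makes \eqref{eqn:q-dilog gluing} specialize to \eqref{eqn:quantum wall crossing}, which is what the surrounding text uses the lemma for. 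I therefore expect the $-\frac1{2k}$ in the displayed statement cannot be obtained from the definition of $h_\pm$ as written. The steps above prove the identity with $+\frac{1}{2k}$, which is the same statement as the displayed one after taking reciprocals of both left-hand sides.
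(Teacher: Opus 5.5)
Your proof is correct and follows the paper's argument: you shift $r\mapsto r+1$ using $q^{2k}=1$ so the exponents telescope to a uniform $\frac{1}{2k}$, then use $\prod_{r=0}^{2k-1}(1+q^{\frac12+r}v)=1+v^{2k}$, which comes from the roots of $x^{2k}=-1$; your branch check in Step 4 makes explicit a point the paper leaves implicit. Your sign diagnosis is also right: the paper's own computation ends with $h_+(qw^{-1})=(1+q^{\frac12}w^{-1})^{-1}h_+(w^{-1})(1+w^{-2k})^{\frac{1}{2k}}$, i.e.\ exponent $+\frac{1}{2k}$, which is exactly what makes \eqref{eqn:q-dilog gluing} specialize to \eqref{eqn:quantum wall crossing}, so the $-\frac{1}{2k}$ in the displayed statement is a typo.
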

\begin{proof}
    Using $q^{2k}=1$, we have
    \begin{align*}
        h_+(qw^{-1})=&\,\prod_{r=0}^{2k-1}(1+q^{\frac{1}{2}+r}qw^{-1})^{-\frac{r}{2k}}\\
        =&\,\prod_{r=0}^{2k-1}(1+q^{\frac{1}{2}+r+1}w^{-1})^{-\frac{r+1}{2k}}(1+q^{\frac{1}{2}+r+1}w^{-1})^{\frac{1}{2k}}\\
        =&\,(1+q^{\frac{1}{2}}w^{-1})^{-1}\prod_{r=0}^{2k-2}(1+q^{\frac{1}{2}+r+1}w^{-1})^{-\frac{r+1}{2k}}\prod_{r=0}^{2k-1}(1+q^{\frac{1}{2}+r+1}w)^{\frac{1}{2k}}\\
        =&\,(1+q^{\frac{1}{2}}w^{-1})^{-1}\prod_{r=0}^{2k-1}(1+q^{\frac{1}{2}+r}w^{-1})^{-\frac{r}{2k}}\prod_{r=0}^{2k-1}(1+q^{\frac{1}{2}+r}w^{-1})^{\frac{1}{2k}}\\
        =&\,(1+q^{\frac{1}{2}}w^{-1})^{-1}h_+(w)(1+w^{-2k})^{\frac{1}{2k}},
    \end{align*}
    where, in the last equality, we used the factorization
    $$x^{2k}+w^{-2k}=\prod_{r=0}^{2k-1}(x-q^{\frac{1}{2}+r}w^{-1})$$
    and evaluated at $x=-1$. The identity for $h_-$ can be proved similarly.
\end{proof}

By Lemma \ref{lem:local deformation}, we obtain

\begin{proposition}\label{prop:A as DQ}
    For each $k\in\bb{Z}_{>0}$, there is a sheaf $\cu{A}_{X,\hbar}^{(k)}$ of strict deformation quantizations of $(X,\Omega_X)$ relative to $B$ such that, $\cu{A}_{X,-\frac{\pi}{k}}^{(k)}=\cu{A}_X^{(k)}$.
\end{proposition}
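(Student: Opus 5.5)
We take $\cu{A}_{X,\hbar}^{(k)}$ to be the sheaf defined by the gluing \eqref{eqn:q-dilog gluing}. Three things must be checked: that this is a sheaf of strict deformation quantizations relative to $B$ in the sense of Definition \ref{def:relative DQ}, and that its specialization at $\hbar=-\frac{\pi}{k}$ recovers $\cu{A}_X^{(k)}$.

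\textbf{Local algebras.} On each chart $U_i^{\pm}$ (or its convex refinements $U_{i,j}^{\pm}$), the local algebra is the analytic $\ul{q}$-quantum torus: holomorphic functions on $X_U$, expanded in Fourier series as in \eqref{eqn: Fourier expansion}, with product
$$f\star_{\ul q} g=\sum_{m,m'}\ul{q}^{\{m,m'\}}\widehat f_m\widehat g_{m'}u^{m^1+m'^1}w^{m^2+m'^2}.$$
Here $\ul{q}^{\{m,m'\}}=e^{\sqrt{-1}\hbar\{m,m'\}}$ and the coefficients are entire in $\hbar$.

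\begin{itemize}
\item \emph{Convergence.} Since $|\ul q^{\{m,m'\}}|=e^{-\mathrm{Im}(\hbar)\{m,m'\}}$, we must restrict $\hbar$ to real values, or shrink the annuli slightly. The product then converges exactly as in the analytic quantum tori of \cite{ChaLeuLiSueYau2026}.
\item \emph{Strictness.} The product is an honest convergent product, not merely a formal series in $\hbar$.
\item \emph{Semiclassical limit.} Expanding in $\hbar$ gives $f\star g=fg+\sqrt{-1}\hbar\sum\{m,m'\}(\cdots)+O(\hbar^2)$. The first-order term is a constant multiple of the Poisson bracket of $\Omega_X=\frac{1}{2}dz^1\wedge dz^2$ in the log-coordinates $\log u=\pi z^1$, $\log w=\pi z^2$.
\item \emph{Reduction mod $\hbar$.} Setting $\ul q=1$ recovers $\cu{O}_X(X_U)$.
\end{itemize}

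\textbf{Well-definedness of the gluing.} Each gluing map sends $u\mapsto u\star\Theta(w)$ and $w\mapsto w$, where $\Theta$ depends only on $w$.

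\begin{itemize}
\item \emph{Algebra isomorphism.} Because $\Theta$ depends only on $w$, the $\ul q$-commutation relation $u\star w=\ul{q}\,w\star u$ is preserved. The map therefore extends to an algebra isomorphism of quantum tori, with inverse given by the same formula using $\Theta^{-1}$.
\item \emph{Holomorphy and invertibility.} On $X_{B_i^\pm}$ we have $|w|^{\pm1}>1$ or $<1$ strictly. Hence for $\hbar$ near the real line the factors $1+\ul q^{c}w^{\mp1}$ are nonvanishing and have well-defined principal logarithms, so the fractional powers in $h_\pm$ are holomorphic, invertible, and invariant under $y^1\mapsto y^1+m^1$.
\item \emph{Cocycle condition.} All walls are parallel and the transition maps to $U_{i+1}^-$ are the identity, so the only triple overlaps are those of the refined cover. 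There the maps compose trivially because the factors commute; this is the same argument as for \eqref{eqn:quantum wall crossing}.
\item \emph{Monodromy-freeness.} This follows in the same way as for \eqref{eqn:quantum wall crossing}.
\end{itemize}

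\textbf{Surjectivity onto $p_*\cu{O}_X$.} At $\ul q=1$ we have $h_\pm(\ul q^{\pm1}w)/h_\pm(w)=1$, so the gluing reduces to \eqref{eqn: A gluing}. The reduction map $\cu{A}^{(k)}_{X,\hbar}\to p_*\cu{O}_X$ is then the identity on underlying functions chartwise. It is surjective: lift a section by the same Fourier series, using Lemma \ref{lem:A isomorphism}-type extension across the singular points.

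\textbf{Specialization at $\hbar=-\frac{\pi}{k}$.} At this value $\ul q=q$, and the local products become $\star_{k^{-1}}$. Lemma \ref{lem:local deformation} identifies the gluing factors with $(1+w^{\mp 2k})^{-\frac{1}{2k}}$. Comparing with \eqref{eqn:quantum wall crossing}, the exponent appears with the opposite sign. The plan is to absorb this discrepancy into a choice of branch or orientation convention for the transition direction $U_i^-\leftrightarrow U_i^+$. Failing that, we compose with the explicit isomorphism $u\mapsto u\,\phi(w)$, which works because these factors are central at $q$. We then invoke the branch-independence argument via $\varphi_{U_i^\pm}$ to conclude $\cu{A}_{X,-\pi/k}^{(k)}\cong\cu{A}_X^{(k)}$.

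\textbf{Expected main obstacle.} The delicate step is the analytic one: making sense of fractional powers and $\ul q^{\pm(\frac12+r)}$ uniformly in $\hbar$, ensuring the branches are single-valued near the walls, and making sure specialization at $q$ lands on the principal branch used in \eqref{eqn:quantum wall crossing}. The first attempt is to fix principal branches on $\{|w|>1\}$ and $\{|w|<1\}$ separately, where $1+\ul q^c w^{\mp1}$ stays in a half-plane for $|\hbar|$ small relative to $\log|w|$. Then the specialization is extended by analytic continuation along real $\hbar$.
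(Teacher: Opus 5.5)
Your route is the paper's own. You glue $\ul q$-quantum tori by \eqref{eqn:q-dilog gluing}, observe that $\ul q=1$ returns \eqref{eqn: A gluing}, and specialize at $\hbar=-\frac{\pi}{k}$ via Lemma~\ref{lem:local deformation}.

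\textbf{The specialization step fails as written.} You take the exponent $-\frac{1}{2k}$ in the statement of that lemma at face value and try to reconcile it with the exponent $+\frac{1}{2k}$ in \eqref{eqn:quantum wall crossing}. None of your proposed repairs can do this.
\begin{itemize}
\item A change of branch multiplies the factor by a $2k$-th root of unity. The two candidates differ by the non-constant function $(1+w^{\mp 2k})^{\frac1k}$.
\item The direction of the transition is not a free convention. Both gluings express $u_{U_i^+}$ in terms of $u_{U_i^-}$, and $U_i^\pm$ are different open sets.
\item A chartwise rescaling $u_{U_i^\pm}\mapsto u_{U_i^\pm}\phi_\pm(w)$ intertwining the two gluings on $X_{B_i^+}$ forces $\phi_-/\phi_+=(1+w^{-2k})^{\frac1k}$ on $\{|w|>1\}$. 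Here $\phi_\pm$ are invertible holomorphic functions of $w$ on $X_{U_i^\pm}$, and both charts contain points over $\{\lambda=0\}$. The identity theorem would then give $(\phi_-/\phi_+)^k=1+w^{-2k}$ on a connected annulus containing the unit circle. This is absurd, because the right-hand side vanishes at $w^{2k}=-1$.
\end{itemize}

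\textbf{There is in fact no discrepancy.} The statement of the lemma carries a sign misprint. Its proof, whose last line reads $h_+(qw^{-1})=(1+q^{\frac12}w^{-1})^{-1}h_+(w^{-1})(1+w^{-2k})^{\frac1{2k}}$, gives exponent $+\frac1{2k}$; you should have recomputed it.

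Concretely, put $t=w^{-1}$ and $L_r=\log(1+q^{\frac12+r}t)$ with the principal branch. This is legitimate because $|q^{\frac12+r}t|=|w|^{-1}<1$ on $X_{B_i^+}$.
\begin{itemize}
\item Reindexing with $q^{\frac12+2k}=q^{\frac12}$ gives $\log h_+(qt)-\log h_+(t)=\frac1{2k}\sum_{r=0}^{2k-1}L_r-L_0$.
\item The $q^{\frac12+r}$ are exactly the $2k$-th roots of $-1$. So summing the power series of the $L_r$ over $r$ kills every mode $t^n$ with $2k\nmid n$, leaving $\sum_r L_r=\log(1+t^{2k})$, again principal.
\end{itemize}
Thus $(1+q^{\frac12}w^{-1})h_+(qw^{-1})/h_+(w^{-1})=(1+w^{-2k})^{\frac1{2k}}$ with the principal branch, and likewise on $X_{B_i^-}$. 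Hence \eqref{eqn:q-dilog gluing} at $\ul q=q$ is literally \eqref{eqn:quantum wall crossing}. This also settles your ``main obstacle'': for real $\hbar$ every factor $1+\ul q^{c}w^{\mp1}$ stays in the right half-plane, so no analytic continuation in $\hbar$ and no branch-change isomorphism is needed.

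\textbf{A smaller point on surjectivity.} Lifting ``by the same Fourier series'' only works over opens contained in a single chart, since chartwise-equal functions are not compatible with the deformed transition maps. This gives local surjectivity onto $p_*\cu{O}_X$ over $B^{sm}$ only. The paper does not argue the singular points either.
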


\begin{remark}
    Treating $\hbar$ as a formal parameter, the quantum dialogarithm
    $$H(w):=\sum_{r=1}^{\infty}\frac{(-1)^{r-1}}{r}\frac{w^r}{\ul{q}^{\frac{r}{2}}-\ul{q}^{-\frac{r}{2}}}$$
    is defined, and we can rewrite the first two gluing equations of \eqref{eqn:q-dilog gluing} as an inner automorphism (cf. \cite[Lemma 3.3, 3.4]{Bou2020a}):
    $$u_{U_i^+}=\begin{dcases}
        h_+(w^{-1})\exp(H(w^{-1}))\star_{\ul{q}} u_{U_i^-}\star_{\ul{q}}\exp(-H(w^{-1}))h_+(w^{-1})^{-1} & \text{ on }X_{B_i^+}\\
        h_-(w)\exp(H(w))\star_{\ul{q}} u_{U_i^-}w^{-1}\star_{\ul{q}}\exp(-H(w))h_-(w)^{-1} & \text{ on }X_{B_i^-}
    \end{dcases}.$$
    In \cite{Bou2020a, Bou2020b}, Bousseau quantized the structure sheaf of log Calabi--Yau surfaces obtained by Gross--Hacking--Keel \cite{GroHacKee2015} using almost exactly the same inner automorphism (without the twisting term $h(w)$) and showed that the quantum dilogarithm is basically the generating function of all genus (log) Gromov--Witten invariants of the mirror Looijenga pair. We may think of our algebra $\cu{A}_X^{(k)}$ as a twisting of the algebra obtained by Bousseau.
\end{remark}

\section{The mirror B-brane of $\cu{B}_{cc}^{(k)}$}\label{sec:mirror B brane}

Parallel to the previous section, where we constructed a space-filling coisotorpic A-brane $\cu{B}_{cc}^{(k)}$, we now turn to the construction of its mirror B-brane. In Subsection \ref{subsec:The semi-affine B-brane}, we review the local construction of the mirror B-brane $\check{\cu{B}}_{cc,0}^{(k)}$ associated with the semi-affine coisotropic A-brane $\cu{B}_{cc, 0}^{(k)}$. This construction was first introduced in \cite{ChaLeuZha2018}; see also \cite{ChaLeuLiSueYau2026}. In Subsection \ref{subsec:The instanton corrected B-brane}, we introduce an \emph{instanton-corrected} version of $\check{\cu{B}}_{cc,0}^{(k)}$. The resulting object, denoted by $\check{\cu{B}}_{cc}^{(k)}$, will be referred to as the \emph{mirror B-brane} of $\cu{B}_{cc}^{(k)}$.

\subsection{The semi-affine mirror B-brane}
\label{subsec:The semi-affine B-brane}
Fix a level $k \in \mathbb{Z}_{>0}$. Applying Chan--Leung--Zhang's transform \cite{ChaLeuZha2018} to $\cu{B}_{cc,0}^{(k)}:=(X_0,L_0^{\otimes k},\nabla^{L_0^{\otimes k}})$, we obtain its mirorr B-brane $\check{\cu{B}}_{cc,0}^{(k)}=(\check{X}^{(k)}_0,\check{E}_0^{(k)})$. The vector bundle $\check{E}_0^{(k)}$ is not only holomorphic, but is also equipped with a Hermitian metric so that the triple $(\check{X}^{(k)}_0,\check{E}_0^{(k)},\nabla^{\check{E}_0^{(k)}})$ is semi-affine in the sense of \cite{ChaLeuZha2018}, where $\nabla^{\check{E}_0^{(k)}}$ denotes the Chern connection of $\check{E}_0^{(k)}$. In particular, the holomorphic structure on $\check{E}_0^{(k)}$ is determined by the $(0, 1)$-part of $\nabla^{\check{E}_0^{(k)}}$.

In what follows, to simplify the notation, we shall suppress the superscript $(k)$ whenever the dependence on the level is clear from the context, reinstating it only when it plays an essential role, particularly in the statements of the main propositions and theorems.

Recall that, on each chart $\check{X}_{U_i^{\pm}}$, the local construction of $(\check{E}_0,\nabla^{\check{E}_0})$ via the kernel of family Dirac operators in \cite{ChaLeuZha2018} is naturally isomorphic to the construction via family geometric quantization developed in \cite{ChaLeuLiSueYau2026}. Both approaches provide concrete local models and explicit coordinate descriptions of the mirror bundle. However, neither yields an explicit description in terms of automorphy factors.

In the present work, we adopt a third approach and describe the local mirror bundle directly through its automorphy factors. This viewpoint also appears in Sis\'ak's thesis \cite{Sis2024}. The resulting description gives an explicit realization of the mirror bundle and its transition data in terms of finite-dimensional representations of the Heisenberg group. To this end, recall that we introduced the primitive $2k$-th root of unity $q := e^{-\frac{\pi\sqrt{-1}}{k}}$ and set $q^r := e^{-\frac{r\pi\sqrt{-1}}{k}}$ for all $r \in \mathbb{Q}$. Consider the $2k \times 2k$ matrices
$$C:=\begin{pmatrix}
1 & 0 & 0 & \cdots & 0\\
0 & q^{-1} & 0 & \cdots & 0\\
0 & 0 & q^{-2} & \cdots & 0\\
\vdots & \vdots & \vdots & \ddots & \vdots\\
0 & 0 & 0 & \cdots & q^{-(2k-1)} 
\end{pmatrix},\quad S:=
\begin{pmatrix}
0 & 0 & 0 & \cdots & 0 & 1\\
1 & 0 & 0 & \cdots & 0 & 0\\
0 & 1 & 0 & \cdots & 0 & 0\\
\vdots & \vdots & \ddots & \vdots & \vdots\\
0 & 0 & 0 & \cdots & 0 & 0\\
0 & 0 & 0 & \cdots & 1 & 0
\end{pmatrix}.$$
These are commonly referred to as the \emph{clock matrix} and \emph{shift matrix}, respectively (cf. \cite{AscDes2019, DanFioFra2014}). 
They satisfy the fundamental commutation relation
$$SC=qCS,$$
and generate the standard $2k$-dimensional representation of the Heisenberg group over $\bb{Z}_{2k} := \mathbb{Z} / (2k\mathbb{Z})$.

Now, we describe the holomorphic vector bundle $\check{E}_0$ together with the connection $\nabla^{\check{E}_0}$ locally over the open subsets $\check{X}_{U_i^{\pm}}$. As will become clear in the next subsection, the Hermitian metric on $E_0$ plays no role in our considerations and will therefore be omitted from the description below. A proof that the description given below is equivalent to the local construction in \cite{ChaLeuLiSueYau2026} will be provided in Appendix \ref{app:Toeplitz}.

We first pass to the universal cover $TU_i^{\pm}$ of $\check{X}_{U_i^{\pm}}$. On $TU_i^{\pm}$, the pullback of the vector bundle $\check{E}_0$ is trivial of rank $2k$. Choosing a smooth frame $\{1^{(\alpha)}\}_{\alpha=1}^{2k}$ over $TU_i^\pm$, the pullback of the connection $\nabla^{\check{E}_0}$ is given by
$$d-\pi\sqrt{-1}\left(kx_1dx_2+\frac{1}{k}\check{y}_1d\check{y}_2\right)\Id_{2k}.$$
To recover $(\check{E}_0, \nabla^{\check{E}_0})$ locally over $\check{X}_{U_i^{\pm}}$, we equip this trivial bundle with the automorphy factors
\begin{equation}
    \label{eqn: unitary automorphy factor}
    \check{\tau}^{\text{uni}}(x, \check{y},m):=\exp\left(\frac{\pi\sqrt{-1}}{k}m_1\check{y}_2\right)A_m^{\text{uni}}, \quad m = (m_1, m_2),
\end{equation}
which determine the descent from $TU_i^{\pm}$ to $\check{X}_{U_i^{\pm}}$. Here, $\{A_m^{\text{uni}}\}_{m\in\bb{Z}^2}$ is a family of $2k\times 2k$ matrices defined by
$$A_m^{\text{uni}}:=q^{-m_1m_2}S^{m_1}C^{m_2},$$
for $m=(m_1,m_2)$. The superscript ``$\text{uni}$'' is used to distinguish this construction from the one described later in terms of holomorphic frames. Using the relation $SC=qCS$,
it is easy to see that $\{A_m^{\text{uni}}\}_{m\in\bb{Z}^2}$ satisfies
$$A_{m+m'}^{\text{uni}}=q^{-m_1'm_2}A_{m'}^{\text{uni}}A_m^{\text{uni}},$$
which is equivalent to the cocycle condition
$$\check{\tau}^{\text{uni}}(x, \check{y},m+m')=\check{\tau}^{\text{uni}}\left(x, \check{y}+m,m'\right) \cdot \check{\tau}^{\text{uni}}(x, \check{y},m).$$
Since the above description agrees with the local construction in \cite{ChaLeuLiSueYau2026}, there exist distinguished gauge transformations that glue the local data into the global mirror bundle $(\check{E}_0,\nabla^{\check{E}_0})$. As an explicit description of these gauge transformations will not be needed in this article, we omit it.


\subsection{The instanton corrected mirorr B-brane}
\label{subsec:The instanton corrected B-brane}
We now incorporate wall-crossing corrections into the construction of the mirror B-brane. A natural first attempt is to modify the gluing of the local models of $\nabla^{\check{E}_0}$ according to the corrected coordinate transformation
$$\begin{dcases}
    x_{U_i^+,1}=x_{U_i^-,1}+\frac{1}{2\pi k}\log|1+\check{w}^{\pm 1}|,\\
    \check{y}_{U_i^+,1}=\check{y}_{U_i^-,1}+\frac{1}{2\pi}\arg(1+\check{w}^{\pm 1}),
\end{dcases}$$
with the aim of producing a monodromy-free unitary connection. However, a straightforward calculation shows that
\begin{align*}
\nabla^{\check{E}_0}|_{\check{X}_{U_i^+}}-\nabla^{\check{E}_0}|_{\check{X}_{U_i^-}}=&\,\frac{\sqrt{-1}}{4\pi k}\left(\log|1+\check{w}^{-1}|d\log|\check{w}^{-1}|+\arg(1+\check{w}^{-1})d\arg(\check{w}^{-1})\right)\\
    =&\,\frac{\sqrt{-1}}{4\pi k}\text{Re}\left(\log(1+\check{w}^{-1})d\log(\ol{\check{w}}^{-1})\right),
\end{align*}
which is not $d$-closed, so the connection $\nabla^{\check{E}_0}$ cannot be extended to $\check{X}$. Nevertheless, the holomorphic strtucture can be glued. To see this, one finds that
$$(\nabla^{\check{E}_0})^{(0,1)}=\dbar-\frac{k\pi\sqrt{-1}}{2}\check{z}_1 d\ol{\check{z}_2} \cdot \Id_{2k} \quad \text{on } \check{X}_{U_i^{\pm}}$$
and
$$e^{(\alpha)}:=\exp\left(\frac{k\pi\sqrt{-1}}{2}\check{z}_1\ol{\check{z}}_2\right)1^{(\alpha)}, \quad \alpha = 1, \dots, 2k,$$
give a holomorphic frame of the pullback of $\check{E}_0$ over $TU_i^\pm$. In this holomorphic frame, the automorphy factors are given by
\begin{equation}
    \label{eqn: holomorphic automorphy factor}
    \check{\tau}(\check{z},m):=\exp\left(\frac{\pi}{2}(m_1\check{z}_2-m_2\check{z}_1)\right)A_m,
\end{equation}
where
$$A_m:=q^{\frac{1}{2}m_1m_2}A_m^{\text{uni}}=q^{-\frac{1}{2}m_1m_2}S^{m_1}C^{m_2}.$$
It satisfies
$$A_{m+m'}=q^{\frac{1}{2}(m_2'm_1-m_1'm_2)}A_{m'}A_m,$$
which is equivalent to the cocycle condition
$$\check{\tau}(\check{z},m+m')=\check{\tau} \left(\check{z}+\frac{\sqrt{-1}}{k}m,m'\right) \cdot \check{\tau}(\check{z},m).$$

Next, we define gauge transformations relating the local models of $\check{E}_0$ introduced above. Note that on the overlap $U_i^-\cap U_i^+=B_i^+\cup B_i^-$, we have
$$\begin{dcases}
    m_{U_i^+,1}=m_{U_i^-,1} & \text{ on } \check{X}_{B_i^+},\\
    m_{U_i^+,1}=m_{U_i^-,1}-m_{U_i^-,2} & \text{ on } \check{X}_{B_i^-},
\end{dcases}$$
This gives
$$A_{m_{U_i^+}}=\begin{dcases}
    A_{m_{U_i^-}}  & \text{ on } \check{X}_{B_i^+},\\
    q^{\frac{1}{2}m_2^2}S^{-m_2}A_{m_{U_i^-}} & \text{ on } \check{X}_{B_i^-}.
\end{dcases}$$
Therefore, in terms of the coordinates on $\check{X}_{U_i^-}$, we have
\begin{align*}
    & \check{\tau}_{U_i^+}(\check{z}_{U_i^+}(\check{z}_{U_i^-}),m_{U_i^+}(m_{U_i^-}))\\
    =& \begin{dcases}
    \exp\left(\frac{\pi}{2}(m_{U_i^-,1}\check{z}_2-m_2\check{z}_{U_i^-,1})\right)\exp\left(-\frac{m_2}{4k}\log(1+\check{w}^{-1})\right)A_{m_{U_i^-}} & \text{ on }\check{X}_{B_i^+},\\
    \exp\left(\frac{\pi}{2}(m_{U_i^-,1}\check{z}_2-m_2\check{z}_{U_i^-,1})\right)\exp\left(-\frac{m_2}{4k}\log(1+\check{w})\right)q^{\frac{1}{2}m_2^2}S^{-m_2}A_{m_{U_i^-}} & \text{ on }\check{X}_{B_i^-}.
\end{dcases}
\end{align*}
Let
\begin{equation}
    \label{eqn: definition of T}
    T:=\sum_{r=0}^{2k-1}q^{-\frac{1}{2}r^2}S^r.
\end{equation}
Using the fact that $q$ is a $2k$-th root of unity, a straightforward calculation shows that
\begin{equation}\label{eqn:commuting T and A}
    Tq^{\frac{1}{2}m_2^2}S^{-m_2}A_{m_{U_i^-}}=A_mT.
\end{equation}
Define
$$\check{G}_{U_i^-U_i^+}(\check{z}_2):=\begin{dcases}
    \exp\left(-\frac{\sqrt{-1}}{4}\check{z}_2\log(1+\check{w}^{-1})\right)\Id_{2k} & \text{ on }\check{X}_{B_i^+},\\
    \exp\left(-\frac{\sqrt{-1}}{4}\check{z}_2\log(1+\check{w})\right)T & \text{ on }\check{X}_{B_i^-},
\end{dcases}$$
Using \eqref{eqn:commuting T and A} and the fact that
$$\begin{dcases}
    \log(1+\check{w}^{-1})\mapsto\log(1+\check{w}^{-1}) & \text{ if }|\check{w}|>1,\\
    \log(1+\check{w})\mapsto\log(1+\check{w}) & \text{ if }|\check{w}|<1  
\end{dcases}$$
when $\check{y}_2\mapsto\check{y}_2+m_2$, 
we see that $\check{G}_{U_i^-U_i^+}(\check{z}_2)$ is a well-defined gauge equivalence between the automorphy factors on $\check{X}_{U_i^+\cap U_i^-}$, i.e.
$$\check{G}_{U_i^-U_i^+}\left(\check{z}_2+\frac{\sqrt{-1}}{k}m_2\right)\cdot \check{\tau}_{U_i^+}\left(\check{z}_{U_i^+}(\check{z}_{U_i^-}),m_{U_i^+}(m_{U_i^-})\right)=\check{\tau}_{U_i^-}(\check{z}_{U_i^-},m_{U_i^-})\cdot \check{G}_{U_i^-U_i^+}\left(\check{z}_2\right).$$
The gauge equivalence between $\check{E}_0|_{\check{X}_{U_i^+}}$ and $\check{E}_0|_{\check{X}_{U_{i+1}^-}}$ is again trivial by taking $\check{G}_{U_{i+1}^-U_i^+}\equiv 1$.

Note that the above treatments apply to each level $k$. This finishes the construction of a holomorphic vector bundle $\check{E}^{(k)}$ of rank $2k$ over $\check{X}^{(k)}$. We call $\check{\cu{B}}_{cc}^{(k)} := (\check{X}^{(k)},\check{E}^{(k)})$ the \emph{mirror B-brane} of $\cu{B}_{cc}^{(k)}$.

Following the standard description of the category of B-branes on $\check{X}^{(k)}$ in terms of the derived category of coherent sheaves, we define the graded algebra
$$\Hom_B(\check{\cu{B}}_{cc}^{(k)},\check{\cu{B}}_{cc}^{(k)}):=(H^{\bullet}(\check{X}^{(k)},\End(\check{E}^{(k)})),\circ),$$
where the product $\circ$ is induced by composition of endomorphisms and the cup product on cohomology.

In analogous to Lemma \ref{lem:A isomorphism}, we have

\begin{lemma}\label{lem:B isomorphism}
    We have $H^q(\check{X}^{(k)'},\End(E^{(k)})|_{\check{X}^{(k)'}})=H^q(\check{X}^{(k)},\End(\check{E}^{(k)}))$, for all $q\geq 0$, where $\check{X}^{(k)'}$ is the complex manifold whose underlying topological space is $\check{X}^{(k)}_0$ with gluing given by \eqref{eqn: B gluing}.
\end{lemma}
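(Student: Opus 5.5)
This is the B-side counterpart of Lemma \ref{lem:A isomorphism}, and I would prove it the same way, by a Hartogs-type extension argument. I read $\check{X}^{(k)'}$ as the open subset $(\check{p}^{(k)})^{-1}(B^{sm})\subset\check{X}^{(k)}$, i.e. $\check{X}^{(k)}$ with the torus fibres over the $n+1$ points $(s_i,0)$ removed. The first point to settle is what these removed fibres are. In the toric model of the Remark above, $\check{X}^{(k)}\cong X_\Sigma$, and the complement $\check{X}^{(k)}\setminus\check{X}^{(k)'}$ is a finite union of real $2$-dimensional tori. Each of these sits inside a compact curve (the exceptional $\mathbb{P}^1$'s) or a toric divisor. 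So the removed set $Z$ is a closed subset of real codimension $2$ in the complex surface $\check{X}^{(k)}$.

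The first step is $q=0$. Sections of the holomorphic vector bundle $\End(\check{E}^{(k)})$ on $\check{X}^{(k)}\setminus Z$ should extend uniquely across $Z$. Uniqueness is clear because $Z$ is nowhere dense. For existence, I would work in local holomorphic coordinates $(\check{u},\check{w})$ near a point of $Z$ and trivialise $\End(\check{E})$ using the frames $e^{(\alpha)}$ and the gluing $\check{G}$. Then I would use the fact that $Z$ is a real $2$-dimensional torus. Explicitly it is $\{|\check{w}|=1,\ |\check{u}|=\text{const}\}$ in suitable coordinates, or a torus inside the curve $\{\check{u}=0\}$-type locus. Its complement in a small polydisc is a Hartogs figure or a Reinhardt domain whose envelope of holomorphy is the full polydisc. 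This gives a Laurent-series extension argument, exactly parallel to the Fourier-mode argument in Lemma \ref{lem:A isomorphism}.

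For higher $q$, the plan is to mirror the A-side argument. I would pick the refined cover $\{U_{i,j}^{\pm}\}$ of $B^{sm}$ from Figure \ref{fig:refined_charts} and add small discs $D_i$ around the singular points $(s_i,0)$. The preimages $\check{X}_{U_{i,j}^{\pm}}$ are Stein, since they are products of annuli in $(\check{u},\check{w})$. Their intersections are Stein as well, so by Cartan's Theorem B this is a Leray cover for $\End(\check{E}^{(k)})|_{\check{X}'}$. The preimages $\check{X}_{D_i}$ are neighbourhoods of the singular fibres. I would then show that adding the $D_i$ does not change the Čech complex. By the extension step, sections over $\check{X}_{D_i}\cap\check{X}_{U_{i,j}^{\pm}}$ and over their unions agree with sections over the corresponding regions with $Z$ removed. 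This should identify $\check{H}^q(\{U_{i,j}^\pm\},\End\check{E}|_{\check X'})$ with $\check{H}^q(\{U_{i,j}^\pm\}\cup\{D_i\},\End\check{E})$. The latter computes $H^q(\check{X}^{(k)},\End\check{E})$, provided the $\check{X}_{D_i}$ and their intersections are acyclic; I would get this by choosing $D_i$ so that $\check{X}_{D_i}$ is Stein, or at least acyclic for this bundle.

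An alternative route for higher $q$ is local cohomology. It suffices to show $\mathcal{H}^j_Z(\End\check{E})=0$ for $j\le q+1$. Since $Z$ is only real codimension $2$, it is not an analytic set, so this would again have to be reduced to the explicit Reinhardt-domain computation.

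I expect the main obstacle to be precisely this point. A real codimension-$2$ torus is not analytic, so Hartogs and Scheja-type vanishing theorems do not apply directly. I would therefore rely on the explicit toric coordinates and Laurent expansions. In those coordinates, the envelope of holomorphy of the complement of $Z$ in a polydisc neighbourhood is the whole neighbourhood, and the relevant Čech groups can be computed term by term in Fourier modes. This is the same mechanism as the one behind Lemma \ref{lem:A isomorphism}.
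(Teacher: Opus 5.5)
The paper offers nothing beyond your $q=0$ step. Lemma~\ref{lem:B isomorphism} is asserted by analogy with Lemma~\ref{lem:A isomorphism}, whose proof is only that local sections extend uniquely across the singular fibres. Two side remarks on that step, suppressing $(k)$:
\begin{itemize}
\item The removed set $Z$ is the union of the $n+1$ \emph{nodal} fibres. Away from the node these are totally real. They are not tori lying in exceptional curves: a $2$-torus cannot embed in a $\bb{P}^1$, nor in a divisor $\cong\bb{C}$.
\item The frames $e^{(\alpha)}$ and the gauge maps $\check{G}$ exist only on the semi-flat charts. Trivialising $\End(\check{E})$ near $Z$ therefore presupposes that $\check{E}$ extends across $Z$.
\end{itemize}

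The genuine gap is the passage to $q\geq 1$. Comparing the \v{C}ech complexes of $\{U^{\pm}_{i,j}\}$ and $\{U^{\pm}_{i,j}\}\cup\{D_i\}$ (equivalently, Mayer--Vietoris for $\check{X}=\check{X}'\cup\bigcup_i\check{X}_{D_i}$) needs $H^1(\check{X}_{D_i}\setminus Z,\End\check{E})=0$. This amounts to your own criterion $\mathcal{H}^2_Z(\End\check{E})=0$, and that criterion is false.

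Hartogs extension only controls $\mathcal{H}^0_Z$ and $\mathcal{H}^1_Z$. Along a real-analytic totally real surface in a complex surface, Sato's theorem identifies $\mathcal{H}^2_Z(\cu{O})$ with the sheaf of hyperfunctions, which is nonzero. Your own model already shows the failure:
\begin{itemize}
\item Take $T=\{|z_1|=|z_2|=1\}\subset\bb{C}^2$. Cover $\bb{C}^2\setminus T$ by the Stein sets $\{|z_1|\neq1\}$ and $\{|z_2|\neq1\}$.
\item Take the cocycle equal to $\frac{1}{(z_1-1)(z_2-1)}$ on $\{|z_1|>1,|z_2|>1\}$ and $0$ on the other components. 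It is not a coboundary. Sections over $\{|z_1|>1\}\times\bb{C}$ have no negative powers of $z_2$, and those over $\bb{C}\times\{|z_2|>1\}$ have none of $z_1$, so the monomial $z_1^{-1}z_2^{-1}$ cannot be produced.
\item Hence $H^1(\bb{C}^2\setminus T,\cu{O})\neq0=H^1(\bb{C}^2,\cu{O})$, even though every holomorphic function on $\bb{C}^2\setminus T$ extends.
\end{itemize}
The Hartogs phenomenon is in fact the cause: a domain in $\bb{C}^2$ with vanishing $H^1(\cdot,\cu{O})$ is a domain of holomorphy. A mode-by-mode Laurent computation therefore exhibits obstructions rather than removing them.

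The gap cannot be closed, because the case $q=1$ of the statement fails. Let $\pi:\check{X}\to\bb{C}^2/\bb{Z}_{n+1}$ be the resolution. Then $H^2(\check{X},\End\check{E})=0$, and $H^1(\check{X},\End\check{E})=H^0(R^1\pi_*\End\check{E})$ is finite-dimensional by Grauert's direct image theorem. The local cohomology sequence
\[
H^1(\check{X},\End\check{E})\to H^1(\check{X}',\End\check{E})\to H^2_Z(\check{X},\End\check{E})\to H^2(\check{X},\End\check{E})=0
\]
shows that $H^1(\check{X}',\End\check{E})$ surjects onto $H^2_Z(\check{X},\End\check{E})$. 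That group is infinite-dimensional, since it contains the hyperfunction classes supported on the smooth part of a removed fibre.

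What the extension argument establishes, yours and the paper's alike, is the sheaf identity $\check{p}_*\End\check{E}=\iota_*\iota^{-1}\check{p}_*\End\check{E}$ on $B$, and hence the case $q=0$. Upgrading this to all $q$ would require $R^1\iota_*\iota^{-1}\check{p}_*\End\check{E}=0$, which is exactly the nonvanishing local group above. The same objection applies to Lemma~\ref{lem:A isomorphism}. The \v{C}ech cover $\{U^{\pm}_{i,j}\}$ used in the proof of Theorem~\ref{thm:main theorem} computes cohomology over $B^{sm}$, that is, on $\check{X}'$, not on $\check{X}$.
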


\section{The mirror theorem}\label{sec:mirror theorem}

In the semi-flat setting of \cite{ChaLeuLiSueYau2026}, for each level $k \in \mathbb{Z}_{>0}$, a graded algebra isomorphism
\begin{equation}
    \label{eqn:semi-flat mirror transform}
    \operatorname{Hom}_A(\cu{B}_{cc, 0}^{(k)}, \cu{B}_{cc, 0}^{(k)}) \cong \Hom_B(\check{\cu{B}}_{cc, 0}^{(k)},\check{\cu{B}}_{cc, 0}^{(k)})
\end{equation}
is constructed via twisted family Toeplitz operators, estabilshing a mirror theorem for coisotropic A-branes. Here $\operatorname{Hom}_A(\cu{B}_{cc, 0}^{(k)}, \cu{B}_{cc, 0}^{(k)}) := (H^\bullet(X_0, \mathcal{O}_{X_0}), \star_{0, k^{-1}})$ for a canonical non-commutative product $\star_{0, k^{-1}}$ and $\Hom_B(\check{\cu{B}}_{cc, 0}^{(k)},\check{\cu{B}}_{cc, 0}^{(k)}) := (H^\bullet(\check{X}_0^{(k)}, \operatorname{End} (\check{E}_0^{(k)})), \circ)$.

In this section, we construct a correction of \eqref{eqn: semi-flat mirror transform}:
\begin{equation*}
        \Phi^{(k)}: \Hom_A(\cu{B}_{cc}^{(k)},\cu{B}_{cc}^{(k)}) \to \Hom_B(\check{\cu{B}}_{cc}^{(k)},\check{\cu{B}}_{cc}^{(k)}),
\end{equation*}
which we call the \emph{instanton-corrected mirror transform}, and prove the following mirror theorem for coisotropic A-branes in the current setting of $A_n$-resolutions.

\begin{theorem}\label{thm:main theorem}
    For every $k \in \mathbb{Z}_{>0}$, the instanton-corrected mirror transform is a graded algebra isomorphism
    \begin{equation*}
        \Phi^{(k)}: \Hom_A(\cu{B}_{cc}^{(k)},\cu{B}_{cc}^{(k)}) \overset{\cong}{\longrightarrow}  \Hom_B(\check{\cu{B}}_{cc}^{(k)},\check{\cu{B}}_{cc}^{(k)}).
    \end{equation*}
\end{theorem}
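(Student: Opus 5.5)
The plan is to build $\Phi^{(k)}$ as an isomorphism of sheaves of algebras on $B$ and then pass to cohomology. By Lemma \ref{lem:A isomorphism} and Lemma \ref{lem:B isomorphism}, both sides can be computed on the smooth locus. On the A-side we use the sheaf $\iota^{-1}\cu{A}_X^{(k)}$. On the B-side we use $\check p_*\End(\check E^{(k)})$ over $B^{sm}$, with the corrected gluing \eqref{eqn: B gluing}. Both can be computed by \v{C}ech cohomology on the refined cover $\{U^{\pm}_{i,j}\}$, whose members and intersections are Stein on both sides. So it suffices to produce, for each chart $U=U_i^{\pm}$, an algebra isomorphism
\[
\Phi_U:(H^0(X_U,\cu{O}_X),\star_{k^{-1}})\to H^0(\check X_U,\End(\check E))
\]
that is compatible with restrictions and with the transition data. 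The induced map on \v{C}ech complexes is then an isomorphism of complexes that respects products, hence a graded algebra isomorphism on cohomology.

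\textbf{Local step.} Pull back to the universal cover $TU$ and work in the holomorphic frame $e^{(\alpha)}$. A section of $\End(\check E)$ over $\check X_U$ is a holomorphic matrix function $M(\check z)$ satisfying $M(\check z+\tfrac{\sqrt{-1}}{k}m)\,\check\tau(\check z,m)=\check\tau(\check z,m)\,M(\check z)$.

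First we classify the solutions. Decompose $M$ in the basis $\{S^aC^b\}_{a,b\in\bb{Z}_{2k}}$ of $\mathfrak{gl}_{2k}$ and Fourier expand in $\check u,\check w$. The automorphy factor \eqref{eqn: holomorphic automorphy factor} intertwines the shift in $\check y$ with conjugation by $A_m$, and the scalar parts of $\check\tau$ cancel. From this one expects the following: the coefficient of $\check u^{a}\check w^{b}$, for $a,b$ in a single lattice, is forced to be a fixed multiple of a monomial $S^{\pm b}C^{\pm a}$ (up to sign conventions), and in particular the matrix part depends only on $(a,b)\bmod 2k$.

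This identifies the sections as convergent series of generators $\check u^{a/2k}\check w^{b/2k}\otimes$(matrix). These are naturally in bijection with the Fourier series $\sum \widehat f_m u^{m^1}w^{m^2}$ on the A-side. The identification uses the relation $e^{\pi z}$ versus $e^{2\pi k\check z}$, i.e.\ $u\leftrightarrow \check u^{1/2k}\cdot A$ for a suitable $A$.

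We then define $\Phi_U(u):=U_1$ and $\Phi_U(w):=W_1$, where these are the explicit endomorphism sections built from $S$, $C$ and the appropriate fractional power of $\check u$, $\check w$. We check $W_1U_1=q^{\{\cdot,\cdot\}}$-twisted $U_1W_1$ using $SC=qCS$; this gives a homomorphism from the quantum torus. Bijectivity follows from the classification together with a convergence comparison of the Fourier coefficients. Note that $U_1^{2k}$ and $W_1^{2k}$ are the central scalars $\check u$ and $\check w$; this matches the remark that functions with Fourier modes in $2k\bb{Z}^2$ are central.

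\textbf{Gluing step (main obstacle).} We must check that $\check G_{U_i^-U_i^+}\circ\Phi_{U_i^+}(\cdot)\circ\check G_{U_i^-U_i^+}^{-1}=\Phi_{U_i^-}(\text{gluing }\eqref{eqn:quantum wall crossing}\text{ of }\cdot)$. Since both sides are algebra maps, it suffices to do this on the generators $u$ and $w$.

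For $w$ the check is trivial on $B_i^+$. On $B_i^-$ it reduces to $TC^{\bullet}T^{-1}$ being the correct matrix, which follows from \eqref{eqn:commuting T and A}.

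For $u$ on $B_i^+$, conjugation by the scalar $\exp(-\tfrac{\sqrt{-1}}4\check z_2\log(1+\check w^{-1}))$ is trivial. The content is then that $\Phi_{U_i^+}(u)$, written in $U_i^-$ coordinates, picks up exactly the factor $(1+\check w^{-1})^{1/2k}$ from \eqref{eqn: B gluing}. This must match $\Phi(1+w^{-2k})^{1/2k}=(1+\check w^{-1})^{1/2k}$, using $\Phi(w^{2k})=\check w$. The branch ambiguity is absorbed by the $\zeta$-isomorphism $\varphi_{U_i^\pm}$.

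On $B_i^-$ we additionally need $T\,\Phi_{U_i^-}(u)\,T^{-1}=\Phi_{U_i^-}(u)W_1^{-1}$ up to the scalar. This is a Gauss-sum type identity for $T=\sum q^{-r^2/2}S^r$. It can be verified by conjugating $C$ through $T$ with $q^{2k}=1$, and it is where sign and half-integer exponent conventions are most delicate.

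Finally, the trivial gluings $U_i^+\to U_{i+1}^-$ match because all the data there are identities.
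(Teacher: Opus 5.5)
Your proposal follows essentially the same route as the paper: you classify $H^0(\check X_U,\End(\check E))$ in the basis $S^aC^b$ (the paper's Lemma~\ref{lem:generation}), send $u\mapsto e^{\pi\check z_1}C^{-1}$ and $w\mapsto e^{\pi\check z_2}S$, check compatibility with $\check G_{U_i^-U_i^+}$ on generators using $TC^{-1}T^{-1}=q^{\frac12}C^{-1}S^{-1}$, and pass to \v{C}ech cohomology on the Stein refinement. One small slip: under your own pairing of $\check w$ with powers of $S$, the check for $w$ on $B_i^-$ is trivial, since $T$ is a polynomial in $S$ and therefore commutes with $\Phi(w)$, so no $TC^{\bullet}T^{-1}$ identity is needed there.
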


As in the semi-flat case, the construction of $\Phi^{(k)}$ is local in the base direction. However, rather than relying on the original local construction in \eqref{eqn: semi-flat mirror transform} from \cite{ChaLeuLiSueYau2026}, we obtain the local map by identifying generators on both sides. The global construction then follows from the compatibility of this local description with the wall-crossing gluing data on both sides, which also explains the terminology ``instanton-corrected'' mirror transform.

To introduce the local construction of $\Phi^{(k)}$, we begin by characterizing holomorphic sections of $\operatorname{End}(\check{E}^{(k)})$ over $\check{X}_{U_i^\pm}^{(k)}$. Such sections may be identified with matrix-valued holomorphic functions on the universal cover $TU_i^\pm$ satisfying the natural equivariance condition with respect to the automorphy factor in \eqref{eqn: holomorphic automorphy factor}.

\begin{lemma}\label{lem:generation}
  Let $U\subset B^{sm}$ be an open subset so that the automorphy factor of $\check{E}^{(k)}|_{\check{X}^{(k)}_U}$ is given by
  $$\check{\tau}_U(\check{z}_1,\check{z}_2,m)=\exp\left(\frac{\pi}{2}(m_1\check{z}_2-m_2\check{z}_1)\right)A_m.$$
  Then any element $\Psi \in H^0(\check{X}^{(k)}_U,\End(\check{E}^{(k)}))$ admits a unique expansion of the form
  $$\Psi(\check{z}) = \sum_{a,b=0}^{2k-1}f_{ab}(\check{u},\check{w})e^{\pi(a\check{z}_2-b\check{z}_1)}S^aC^b,$$
  where $f_{ab}\in H^0(\check{X}^{(k)}_U,\cu{O}_{\check{X}^{(k)}})$ are holomorphic functions.
\end{lemma}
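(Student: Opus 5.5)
The plan is to work on the universal cover $TU$ of $\check{X}^{(k)}_U$, where $\check{E}^{(k)}$ pulls back to the trivial bundle of rank $2k$ with the holomorphic frame $\{e^{(\alpha)}\}$. First I will identify $\Psi\in H^0(\check{X}^{(k)}_U,\End(\check{E}^{(k)}))$ with a holomorphic matrix-valued function $\Psi(\check{z})$ on $TU$ satisfying the equivariance condition
$$\Psi\left(\check{z}+\tfrac{\sqrt{-1}}{k}m\right)=\check{\tau}_U(\check{z},m)\,\Psi(\check{z})\,\check{\tau}_U(\check{z},m)^{-1},\qquad m\in\bb{Z}^2.$$
This holds because an endomorphism transforms by conjugation under the automorphy factor. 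The scalar factor $\exp(\frac{\pi}{2}(m_1\check{z}_2-m_2\check{z}_1))$ cancels under conjugation, so the condition reduces to $\Psi(\check{z}+\frac{\sqrt{-1}}{k}m)=A_m\Psi(\check{z})A_m^{-1}$. The scalar $q^{-\frac12 m_1m_2}$ in $A_m$ also cancels, so only $S^{m_1}C^{m_2}$ matters.

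Next I will use the standard fact that the $(2k)^2$ matrices $S^aC^b$, for $0\le a,b\le 2k-1$, form a basis of $M_{2k}(\bb{C})$. They are linearly independent because they are orthogonal for the trace pairing: $\operatorname{tr}(S^aC^b)=0$ unless $a\equiv b\equiv 0$ mod $2k$, using that $q$ is a primitive $2k$-th root of unity. Consequently any holomorphic $\Psi$ on $TU$ has a unique expansion $\Psi=\sum_{a,b}g_{ab}(\check{z})S^aC^b$ with $g_{ab}$ holomorphic on $TU$.

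I then compute the conjugation action on each basis element. From $SC=qCS$ one gets $C^{m_2}S^a=q^{-am_2}S^aC^{m_2}$ and $S^{m_1}C^b=q^{bm_1}C^bS^{m_1}$. Hence
$$S^{m_1}C^{m_2}\,S^aC^b\,C^{-m_2}S^{-m_1}=q^{bm_1-am_2}S^aC^b.$$
By uniqueness of the expansion, equivariance of $\Psi$ is equivalent to
$$g_{ab}\left(\check{z}+\tfrac{\sqrt{-1}}{k}m\right)=q^{bm_1-am_2}\,g_{ab}(\check{z})\quad\text{for all } a,b.$$

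The function $e^{\pi(a\check{z}_2-b\check{z}_1)}$ changes under the same shift by the factor $e^{\frac{\pi\sqrt{-1}}{k}(am_2-bm_1)}=q^{bm_1-am_2}$, since $q=e^{-\pi\sqrt{-1}/k}$. Therefore $f_{ab}:=g_{ab}\,e^{-\pi(a\check{z}_2-b\check{z}_1)}$ is invariant under the deck group $\frac{\sqrt{-1}}{k}\bb{Z}^2$. It is holomorphic, so it descends to an element of $H^0(\check{X}^{(k)}_U,\cu{O}_{\check{X}^{(k)}})$, which can be written as a function of $(\check{u},\check{w})=(e^{2\pi k\check{z}_1},e^{2\pi k\check{z}_2})$. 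Conversely, every such expression with holomorphic $f_{ab}$ is equivariant and hence defines a section. Uniqueness of the $f_{ab}$ follows from uniqueness of the $g_{ab}$.

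The only delicate step is the sign and orientation bookkeeping. This means fixing whether the equivariance reads $\tau\Psi\tau^{-1}$ or $\tau^{-1}\Psi\tau$ with respect to the stated cocycle convention, and checking that the resulting character $q^{\pm(bm_1-am_2)}$ matches the one from $e^{\pi(a\check{z}_2-b\check{z}_1)}$. I would verify this directly against the cocycle identity $\check{\tau}(\check{z},m+m')=\check{\tau}(\check{z}+\frac{\sqrt{-1}}{k}m,m')\check{\tau}(\check{z},m)$. If the opposite convention holds, the character inverts, and the argument goes through with $(a,b)$ replaced by $(-a,-b)$ mod $2k$, up to a reindexing of the basis.
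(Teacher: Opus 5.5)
Your proposal is correct and follows the paper's proof essentially step for step: expand $\Psi$ in the basis $\{S^aC^b\}$, use the equivariance $\Psi(\check{z}+\frac{\sqrt{-1}}{k}m)A_m=A_m\Psi(\check{z})$ to get the character $q^{bm_1-am_2}$ on each coefficient, and divide by $e^{\pi(a\check{z}_2-b\check{z}_1)}$ to obtain periodic $f_{ab}$. You also justify the basis via trace orthogonality and state the converse, both of which the paper leaves implicit. The convention you used is the one forced by the cocycle identity (sections satisfy $s(\check{z}+\frac{\sqrt{-1}}{k}m)=\check{\tau}_U(\check{z},m)s(\check{z})$), so your closing hedge is unnecessary. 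Under the opposite convention the stated form would genuinely fail, since the exponential and matrix indices would no longer match, rather than merely needing a reindexing.
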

\begin{proof}
    Since $\{ S^aC^b \}_{a, b = 0}^{2k-1}$ forms a basis of the vector space of $2k\times 2k$ matrices over $\mathbb{C}$, $\Psi$ can be written uniquely as
    $$\Psi(\check{z})=\sum_{a,b=0}^{2k-1}\Psi_{ab}(\check{z})S^aC^b,$$
    for holomorphic functions $\Psi_{ab}$ on $TU$. Since $\Psi\in H^0(\check{X}^{(k)}_U,\End(\check{E}^{(k)}))$, it satisfies the equivariance condition
    $$\Psi\left(\check{z}+\frac{\sqrt{-1}}{k}m\right)A_m =A_m \Psi(\check{z}).$$
    Using $SC=qCS$ and comparing the coefficient of $C^aS^b$, we have
    $$\Psi_{ab}\left(\check{z}+\frac{\sqrt{-1}}{k}m\right)=q^{bm_1-am_2}\Psi_{ab}(\check{z}).$$
    Hence
    $$\Psi_{ab}(\check{z})=f_{ab}(\check{z})e^{\pi(a\check{z}_2-b\check{z}_1)},$$
    with $f_{ab}$ being $\frac{\sqrt{-1}}{k}\bb{Z}^2$-periodic, i.e. $f_{ab}(\check{z})=f_{ab}(\check{u},\check{w})$. This completes the proof of the lemma.
\end{proof}

For each $U = U_i^\pm$, we define maps
\begin{equation}
    \label{eqn: local mirror map}
    \Phi^{(k)}_U:H^0(X_U,\cu{O}_{X_U})\to H^0(\check{X}^{(k)}_U,\End(\check{E}^{(k)}))
\end{equation}
    by declaring that
    \begin{align*}
        \Phi^{(k)}_U(u):=&\,e^{\pi\check{z}_1}C^{-1},\\
        \Phi^{(k)}_U(w):=&\,e^{\pi\check{z}_2}S,
    \end{align*}
    and extending multiplicatively (with respect to the product $\star_{k^{-1}}$) and $\bb{C}$-linearly. The pairs $(u, w)$ and $(C^{-1}, S)$ satisfy analogous commutation relations: $u \star_{k^{-1}} w = qw \star_{k^{-1}} u$ and $C^{-1}S=qSC^{-1}$. It follows that
    $$\Phi^{(k)}_U:(H^0(X_U,\cu{O}_{X_U}),\star_{k^{-1}})\to(H^0(\check{X}^{(k)}_U,\End(\check{E}^{(k)})),\circ)$$
    is a well defined algebra homomorphism.
    
    Note that functions $f(\check{u},\check{w})$ are generated by $(e^{\pi\check{z}_1}C)^{2k},(e^{\pi\check{z}_2}S^{-1})^{2k}$ since $C^{2k}=S^{2k}=\Id$. By Lemma \ref{lem:generation}, we deduce that $\Phi^{(k)}_U$ are bijective, giving us a collection of algebra isomorphisms.
    
    \begin{remark}
        In Appendix \ref{app:Toeplitz}, we show that the above construction recovers the local mirror transform of \cite{ChaLeuLiSueYau2026}. The present formulation, however, provides a considerably more transparent description, which is particularly well suited to incorporating wall-crossing corrections.
    \end{remark}
    
    The global construction of $\Phi^{(k)}$ will be carried out as part of the proof of our main theorem.

\begin{proof}[Proof of Theorem \ref{thm:main theorem}]

    We verify that the local isomorphisms $\Phi_{U_i^{\pm}}^{(k)}$'s are compatible with the wall-crossing gluing data and hence glue to a globally defined construction. In other words, we need to show that
    \begin{equation}\label{eqn:nontrivial gluing equation}
        \check{G}_{U_i^-U_i^+}(\check{z}_2)\cdot\Phi^{(k)}_{U_i^+}(u_{U_i^+})\left(\check{z}_{U_i^-,1}+\frac{1}{2\pi k}\log(1+\check{w}^{-1}),\check{z}_2\right)\cdot\check{G}_{U_i^-U_i^+}(\check{z}_2)^{-1}=\Phi^{(k)}_{U_i^-}(u_{U_i^+})(\check{z}_{U_i^-,1},\check{z}_2),
    \end{equation}
    and
    \begin{equation}\label{eqn:gluing equation}
        \check{G}_{U_{i+1}^-U_i^+}(\check{z}_2)\cdot\Phi^{(k)}_{U_i^+}(u_{U_i^+})\left(\check{z}_{U_{i+1}^-,1},\check{z}_2\right)\cdot\check{G}_{U_{i+1}^-U_i^+}(\check{z}_2)^{-1}=\Phi^{(k)}_{U_{i+1}^-}(u_{U_i^+})(\check{z}_{U_{i+1}^-,1},\check{z}_2),
    \end{equation}
    where on the right hand side,
    $$u_{U_i^+}=\begin{dcases}
        u_{U_i^-}(1+w^{-2k})^{\frac{1}{2k}} & \text{ on }X_{B_i^+},\\
        u_{U_i^-}w^{-1}(1+w^{2k})^{\frac{1}{2k}}=q^{\frac{1}{2}}u\star_{k^{-1}} w^{-1}(1+w^{2k})^{\frac{1}{2k}} & \text{ on }X_{B_i^-},\\
        u_{U_{i+1}^-} & \text{ for all }i.
    \end{dcases}$$
    Equation \eqref{eqn:gluing equation} trivially holds on $\check{X}^{(k)}_{U_i^+\cap U_{i+1}^-}$ for all $i$, so we only need to prove \eqref{eqn:nontrivial gluing equation} on $\check{X}^{(k)}_{U_i^+\cap U_i^-}=\check{X}^{(k)}_{B_i^+}\sqcup\check{X}^{(k)}_{B_i^-}$.
    
    On $\check{X}^{(k)}_{B_i^+}$, Equation \eqref{eqn:nontrivial gluing equation} simplifies to
    $$\Phi^{(k)}_{U_i^+}(u_{U_i^+})\left(\check{z}_{U_i^-,1}+\frac{1}{2\pi k}\log(1+\check{w}^{-1}),\check{z}_2\right)=\Phi^{(k)}_{U_i^-}(u_{U_i^+})(\check{z}_{U_i^-,1},\check{z}_2),$$
    since $\check{G}_{U_i^-U_i^+}(\check{z}_2)$ is a scalar. We compute that
    $$\Phi^{(k)}_{U_i^+}(u_{U_i^+})\left(\check{z}_{U_i^-,1}+\frac{1}{2\pi k}\log(1+\check{w}^{-1}),\check{z}_2\right)=e^{\pi\check{z}_1}(1+\check{w}^{-1})^{\frac{1}{2k}}C^{-1},$$
    and
    $$\Phi^{(k)}_{U_i^-}(u_{U_i^+})(\check{z}_{U_i^-,1},\check{z}_2)=\Phi^{(k)}_{U_i^-}(u_{U_i^-})(\Id+\Phi^{(k)}_{U_i^-}(w)^{-2k})^{\frac{1}{2k}}=e^{\pi\check{z}_1}(1+\check{w}^{-1})^{\frac{1}{2k}}C^{-1}$$
    as $S^{2k}=\Id$ and $\check{w}=e^{2\pi k\check{z}_2}$. This gives Equation \eqref{eqn:nontrivial gluing equation} on $\check{X}^{(k)}_{B_i^+}$.

    On $\check{X}^{(k)}_{B_i^-}$, we need to show that
    $$T\cdot\Phi^{(k)}_{U_i^+}(u_{U_i^+})\left(\check{z}_{U_i^-,1}+\frac{1}{2\pi k}\log(1+\check{w}^{-1}),\check{z}_2\right)\cdot T^{-1}=\Phi^{(k)}_{U_i^-}(u_{U_i^+})(\check{z}_{U_i^-,1},\check{z}_2),$$
    where $T$ is defined in \eqref{eqn: definition of T}. 
    Using the fact that
    $$TC^{-1}=q^{\frac{1}{2}}C^{-1}S^{-1}T,$$
    we have
    $$T\cdot\Phi^{(k)}_{U_i^+}(u_{U_i^+})\left(\check{z}_{U_i^-,1}+\frac{1}{2\pi k}\log(1+\check{w}^{-1}),\check{z}_2\right)\cdot T^{-1}=q^{\frac{1}{2}}e^{\pi(\check{z}_1-\check{z}_2)}(1+\check{w})^{\frac{1}{2k}}C^{-1}S^{-1}.$$
    On the other hand,
    $$\Phi^{(k)}_{U_i^-}(u_{U_i^+})=q^{\frac{1}{2}}\Phi^{(k)}_{U_i^-}(u_{U_i^-})\cdot\Phi^{(k)}_{U_i^-}(w)^{-1}\cdot(\Id+\Phi^{(k)}_{U_i^-}(w)^{2k})^{\frac{1}{2k}}=q^{\frac{1}{2}}e^{\pi\check{z}_1}C^{-1}e^{-\pi\check{z}_2}S^{-1}(1+\check{w})^{\frac{1}{2k}},$$
    which is the same as $T\cdot\Phi^{(k)}_{U_i^+}(u_{U_i^+})\left(\check{z}_{U_i^-,1}+\frac{1}{2\pi k}\log(1+\check{w}^{-1}),\check{z}_2\right)\cdot T^{-1}$.

    Finally, using the refinement $\{U_{i,j}^{\pm}\}$ as shown in Figure \ref{fig:refined_charts} again, $\check{X}^{(k)}_{U_{i,j}^{\pm}}$ are also Stein and therefore, by Lemma \ref{lem:A isomorphism} and \ref{lem:B isomorphism}, $\Phi^{(k)}$ descends to a graded algebra isomorphism
    $$\Phi^{(k)}:(H^{\bullet}(B,\cu{A}_X^{(k)}),\star_{k^{-1}})\to(H^{\bullet}(\check{X}^{(k)},\End(\check{E}^{(k)})),\circ),$$
    proving our main theorem.
\end{proof}


\appendix
\section{The mirror transform is locally twisted family Toeplitz operators}\label{app:Toeplitz}
In this appendix, we prove that our mirror transform, locally, agrees with the construction given in \cite{ChaLeuLiSueYau2026}.

Let $(\check{E}_0^{(k)}, \nabla^{\check{E}_0^{(k)}})$ be the mirror bundle of $\cu{B}_{cc, 0}^{(k)}$ constructed in \cite{ChaLeuLiSueYau2026}. To simplify notation, we will often suppress the superscript $(k)$. Consider the open subset $U = U_i^\pm \subset B$. We briefly recall the local description of $(\check{E}_0 |_{\check{X}_U}, \nabla^{\check{E}_0} |_{\check{X}_U})$ from \cite{ChaLeuLiSueYau2026}. 
In this local model, smooth sections of $\check{E}_0 |_{\check{X}_U}$ can be identified with smooth $\mathbb{C}$-valued functions on $U \times \mathbb{R}$ that are rapidly decreasing in the $\mathbb{R}$-direction.

More precisely, we denote by $\mathcal{C}^\infty(U, \mathcal{S}(\mathbb{R}))$ the space of functions $s \in \mathcal{C}^\infty(U \times \mathbb{R}, \mathbb{C})$ such that, for all compact subset $K \subset U$ and all multi-indices $\mu \in \mathbb{N}^2$ and integers $\nu, \eta \in \mathbb{N}$,
	\begin{equation*}
		\lVert s \rVert_{K, \mu, \nu, \eta} := \sup_{(x, \check{y}_1) \in K \times \mathbb{R}} \left\lvert y^\eta \cdot \frac{\partial^{\lvert \mu \rvert + \lvert \nu \rvert} s}{\partial x^\mu \partial \check{y}_1^\nu} (x, \check{y}_1) \right\rvert < \infty.
	\end{equation*}
    Here, $\check{y}_1$ denotes the standard coordinate on the $\mathbb{R}$-factor. We equip $\mathcal{C}^\infty(U, \mathcal{S}(\mathbb{R}))$ with the locally convex topology generated by the seminorms $\lVert \,\cdot\, \rVert_{K, \mu, \nu, \eta}$. It is naturally a $\mathcal{C}^\infty(U, \mathbb{C})$-module via pointwise multiplication in the $U$-variable.
    
    By \cite[Proposition 4.6]{ChaLeuLiSueYau2026}, there exists a $\mathcal{C}^\infty(U, \mathbb{C})$-linear isomorphism of topological vector spaces
\begin{equation}
			\label{Equation: charaterization of sections of mirror brane}
			\Gamma(\check{X}_U, \check{E}_0) \to \mathcal{C}^\infty(U, \mathcal{S}(\mathbb{R})), \quad s \mapsto \langle s \rangle,
\end{equation}
where $\Gamma(\check{X}_U, \check{E}_0)$ is equipped with the standard compact-open $\mathcal{C}^\infty$-topology. 
Via this identification, the natural $\mathcal{C}^\infty(\check{X}_U, \mathbb{C})$-module structure on $\Gamma(\check{X}_U,\check{E}_0)$ is transported to a $\mathcal{C}^\infty(\check{X}_U, \mathbb{C})$-module structure on $\mathcal{C}^\infty(U,\mathcal{S}(\mathbb{R}))$, extending its natural $\mathcal{C}^\infty(U, \mathbb{C})$-module structure. This action is determined by the Fourier modes
\begin{equation}
\label{eqn: fibrewise Fourier mode}
    \check{F}_m(x,\check{y})
:=
e^{2\pi\sqrt{-1}(m_1\check{y}_1+m_2\check{y}_2)},
\quad
m=(m_1,m_2)\in\mathbb Z^2.
\end{equation}
Indeed, for every $s\in\Gamma(\check{X}_U,\check{E}_0)$ and $m\in\mathbb Z^2$, one has
\begin{equation}
    \label{eqn: multiplication by Fourier modes}
    \langle \check{F}_m \cdot s \rangle (x, \check{y}_1) = e^{2\pi\sqrt{-1} m_1\check{y}_1} \cdot \langle s \rangle(x, \check{y}_1 + 2km_2).
\end{equation}

We now show that the above description is equivalent to the approach adopted in this article. Consider the trivial Hermitian vector bundle $TU \times \mathbb{C}^{2k}$ of rank $2k$ equipped with the unitary connection $\check{\nabla} = d + \alpha$, where
\begin{equation*}
    \alpha := -\pi\sqrt{-1}\left( k x_1dx_2 + \frac{1}{k} \check{y}_1 d\check{y}_2 \right) \Id_{2k} \in \Omega^1(TU, \mathfrak{u}(2k)).
\end{equation*}
Let $\check{\tau}^{\text{uni}}(x, \check{y}, m)$ denote the unitary automorphy factor defined in \eqref{eqn: unitary automorphy factor}. It is compatible with the connection $1$-form $\alpha$ in the sense that for all $m \in \mathbb{Z}^2$,
\begin{align*}
    & \alpha(x, \check{y} + m) - \check{\tau}^{\text{uni}} (x, \check{y}, m) \alpha(x, \check{y}) (\check{\tau}^{\text{uni}}(x, \check{y}, m))^{-1} = -(d\check{\tau}^{\text{uni}}(x, \check{y}, m)) (\check{\tau}^{\text{uni}}(x, \check{y}, m))^{-1}.
\end{align*}
Consequently, the quotient Hermitian vector bundle $\mathbf{\check{E}}$ is well defined, and $\check{\nabla}$ descends to a unitary connection on $\mathbf{\check{E}}$, which we denote by the same symbol (cf. \cite[Proposition 6.1.3]{Sis2024}). We identify $\Gamma(\check{X}_U, \mathbf{\check{E}})$ with the space of functions $s \in \mathcal{C}^\infty(TU, \bb{C}^{2k})$ satisfying the equivariance condition:
\begin{equation}
    \label{eqn: equivariance condition}
    s(x, \check{y} + m) = \check{\tau}^{\text{uni}} (x, \check{y}, m) s(x, \check{y}),
\end{equation}
for all $m \in \mathbb{Z}^2$. Here, $\check{\tau}^{\text{uni}} (x, \check{y}, m)$ acts on $s(x, \check{y})$ by matrix multiplication.

\begin{proposition}
    For each $k \in \mathbb{Z}_{>0}$, there is a $\mathcal{C}^\infty(\check{X}_U^{(k)})$-linear isomorphism
    \begin{equation}
    \label{eqn: identification of local models}
    \Gamma(\check{X}_U^{(k)}, \check{E}_0^{(k)}) \to \Gamma(\check{X}_U^{(k)}, \mathbf{\check{E}}^{(k)}), \quad s \mapsto [s],
\end{equation}
such that for all $V \in \Gamma(\check{X}_U^{(k)}, T\check{X}_U^{(k)})$ and $s \in \Gamma(\check{X}_U^{(k)}, \check{E}_0^{(k)})$,
\begin{equation*}
    \left[ \nabla_V^{\check{E}_0^{(k)}} s \right] = \check{\nabla}_V^{(k)} [s].
\end{equation*}
\end{proposition}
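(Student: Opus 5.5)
The plan is to realize the isomorphism \eqref{eqn: identification of local models} as a vector-valued Weil--Brezin (Zak) transform. It periodizes a Schwartz function in the $\check{y}_1$-direction along the lattice $2k\bb{Z}$ and records the $2k$ residue classes modulo $2k$ as the components of a $\bb{C}^{2k}$-valued function. Using the isomorphism \eqref{Equation: charaterization of sections of mirror brane} of \cite[Proposition 4.6]{ChaLeuLiSueYau2026}, it suffices to construct a map $\mathcal{C}^\infty(U,\mathcal{S}(\bb{R}))\to\Gamma(\check{X}_U,\mathbf{\check{E}})$. For $\varphi=\langle s\rangle$ and $\alpha\in\{0,\dots,2k-1\}$ I would set
\begin{equation*}
    [s]_\alpha(x,\check{y}) := e^{\pi\sqrt{-1}c_1\check{y}_1\check{y}_2/k}\sum_{n\in\bb{Z}} e^{2\pi\sqrt{-1}c_2(\alpha+2kn)\check{y}_2/(2k)}\,\varphi(x,\check{y}_1+\alpha+2kn).
\end{equation*}
The constants $c_1,c_2$ (signs and factors of $\frac12$) are to be fixed by the requirement below. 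Rapid decay of $\varphi$, uniform on compact subsets of $U$, makes the series converge in the $\mathcal{C}^\infty$ topology, so $[s]$ is smooth.

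The key steps, in order, are as follows.
\begin{enumerate}
\item Choose $c_1,c_2$ so that $[s]$ satisfies the equivariance condition \eqref{eqn: equivariance condition} with respect to $\check{\tau}^{\text{uni}}$. Shifting $\check{y}_1\mapsto\check{y}_1+1$ moves $\alpha\mapsto\alpha+1$, i.e.\ acts by $S$, up to the scalar $e^{\pi\sqrt{-1}\check{y}_2/k}$ coming from the prefactor. Shifting $\check{y}_2\mapsto\check{y}_2+1$ multiplies the $\alpha$-component by $q^{-\alpha}$ up to a sign, i.e.\ acts by $C$. Comparing with $A^{\text{uni}}_m=q^{-m_1m_2}S^{m_1}C^{m_2}$ pins down the constants. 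It suffices to check $m=(1,0)$ and $m=(0,1)$, by the cocycle property of both sides.
\item Show $\mathcal{C}^\infty(\check{X}_U)$-linearity. Linearity over $\mathcal{C}^\infty(U)$ is obvious. By \eqref{eqn: multiplication by Fourier modes}, it then suffices to check that $[\check{F}_m\cdot s]=\check{F}_m[s]$ for the modes \eqref{eqn: fibrewise Fourier mode}. The mode $m_1$ multiplies $\varphi$ by $e^{2\pi\sqrt{-1}m_1\check{y}_1}$, which is invariant under $\check{y}_1\mapsto\check{y}_1+\alpha+2kn$. The mode $m_2$ shifts the argument by $2km_2$, which is absorbed by reindexing $n\mapsto n+m_2$ and produces exactly $e^{2\pi\sqrt{-1}m_2\check{y}_2}$. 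General smooth functions then follow by continuity of the module action and convergence of fibrewise Fourier expansions.
\item Construct the inverse. Invert the map by unfolding: integrate $e^{-\pi\sqrt{-1}c_1\check{y}_1\check{y}_2/k}[s]_\alpha$ against the appropriate character over $\check{y}_2\in[0,1]$ to isolate a single Schwartz piece. Then reassemble $\varphi$ on $\bb{R}=\bigsqcup_\alpha(\alpha+2k\bb{Z}+[0,1))$. Check that equivariant smooth sections yield Schwartz functions; this uses integration by parts in $\check{y}_2$, and continuity in the seminorms $\lVert\cdot\rVert_{K,\mu,\nu,\eta}$.
\item Verify compatibility with the connections. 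Recall from \cite{ChaLeuLiSueYau2026} the explicit formula for $\nabla^{\check{E}_0}$ transported to $\mathcal{C}^\infty(U,\mathcal{S}(\bb{R}))$. I expect $\nabla_{\partial_{\check{y}_1}}$ to be $\partial_t$ with $t$ the Schwartz variable, $\nabla_{\partial_{\check{y}_2}}$ to be multiplication by a linear function of $t$, and $\nabla_{\partial_{x_j}}$ to be $\partial_{x_j}$ plus the term $-\pi\sqrt{-1}kx_1dx_2$. Then apply the transform termwise and compare with $d+\alpha$. By $\mathcal{C}^\infty$-linearity of both sides in $V$, it suffices to take $V\in\{\partial_{x_1},\partial_{x_2},\partial_{\check{y}_1},\partial_{\check{y}_2}\}$.
\end{enumerate}

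The main obstacle is step 4, together with the phase bookkeeping in step 1. The precise normalization of the \cite{ChaLeuLiSueYau2026} model (the factor $2k$ in \eqref{eqn: multiplication by Fourier modes}, the half-form conventions, and the sign of the Heisenberg twist) must match both $\check{\tau}^{\text{uni}}$ and the connection form $\alpha=-\pi\sqrt{-1}(kx_1dx_2+\frac1k\check{y}_1d\check{y}_2)\Id_{2k}$ simultaneously. My first attempt would be to fix $c_1,c_2$ from the $\check{y}_2$-component of the connection, since differentiating the prefactor must produce exactly $-\frac{\pi\sqrt{-1}}{k}\check{y}_1$. I would then check that the same choice yields the automorphy factors. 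If it does not, I would modify $[s]$ by a global gauge factor $e^{\sqrt{-1}\phi(x,\check{y})}$, which alters both sides consistently.
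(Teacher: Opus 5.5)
Your plan is essentially the paper's proof. The paper uses exactly this periodization, $[s]_l(x,\check y)=\sum_{p\in\bb{Z}}\langle s\rangle(x,\check y_1-l-2kp)\,e^{\frac{\pi\sqrt{-1}}{k}(l+2kp)\check y_2}$, verifies equivariance, checks $\check F_m$-compatibility by reindexing $p$, and checks the connection on the four coordinate vector fields via the Schwartz-model formulas of \cite{ChaLeuLiSueYau2026}; it outsources bijectivity to \cite{AscDes2019} and \cite[Proposition 4.6]{ChaLeuLiSueYau2026}, whereas you invert by hand (in fact $\langle s\rangle(x,t)=\int_0^1[s]_0(x,t,\check y_2)\,d\check y_2$ already suffices).

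Your heuristics for the constants are off, although your ansatz does contain the right transform. The constants are forced to $c_1=0$ and $c_2=-1$, with components relabelled by $l\equiv-\alpha \pmod{2k}$; without the relabelling, equivariance holds for $S^{-1},C^{-1}$ rather than $S,C$. A nonzero quadratic prefactor already breaks the $m=(0,1)$ equivariance through a $\check y_1$-dependent factor. The scalar $e^{\pi\sqrt{-1}\check y_2/k}$ for $m=(1,0)$ comes from reindexing the phase, not from a prefactor. Finally, the $\partial_{\check y_2}$ check works because $-\frac{\pi\sqrt{-1}}{k}\check y_1$ plus the derivative of the phase equals $-\frac{\pi\sqrt{-1}}{k}(\check y_1-l-2kp)$, i.e.\ multiplication by $-\frac{\pi\sqrt{-1}}{k}t$.
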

\begin{proof}
For each $s \in \Gamma(\check{X}_U, \check{E}_0)$, define $[s] = ([s]_0, ..., [s]_{2k-1}) \in \mathcal{C}^\infty(TU, \mathbb{C}^{2k})$, where for all $l = 0, \dots, 2k-1$,
\begin{equation*}
        [s]_l(x, \check{y}) := \sum_{p \in \mathbb{Z}} \langle s \rangle ( x, \check{y}_1 - l - 2kp) \cdot e^{\frac{\pi\sqrt{-1}}{k} (l + 2kp) \check{y}_2}.
\end{equation*}
This construction can be viewed as a family version of the correspondence between sections of a $U(2k)$-bundle with topological charge $1$ over a $2$-torus and Schwartz functions on $\bb{R}$, as described in \cite[page 9]{AscDes2019}, 
up to a minor modification. For all $l = 0, \dots, 2k-1$ and $m \in \mathbb{Z}^2$, a direct computation yields
\begin{align*}
    [s]_l(x, \check{y} + m)
    = & q^{-m_1m_2}  e^{\frac{\pi\sqrt{-1}}{k} m_1 \check{y}_2} q^{-m_2 l'} [s]_{l'}(x, \check{y}),
\end{align*}
where $l' \in \{0, ..., 2k-1\}$ is the unique index satisfying $l' \equiv l-m_1 \pmod {2k}$. 
Therefore, $[s]$ satisfies the equivariance condition \eqref{eqn: equivariance condition}, 
and hence $[s] \in \Gamma(\check{X}_U, \mathbf{\check{E}})$. Following \cite{AscDes2019} together with \cite[Proposition 4.6]{ChaLeuLiSueYau2026}, it is straightforward to verify that the assignment $s \mapsto [s]$ defines a $\mathcal{C}^\infty(U, \mathbb{C})$-linear isomorphism
$$\Gamma(\check{X}_U, \check{E}_0) \cong \Gamma(\check{X}_U, \mathbf{\check{E}}).$$
We next verify that this isomorphism is $\mathcal{C}^\infty(\check{X}_U, \mathbb{C})$-linear. It suffices to check compatibility with multiplication by the Fourier modes $\check{F}_m$ stated in \eqref{eqn: fibrewise Fourier mode}. For all $l = 0, \dots, 2k-1$ and $m \in \mathbb{Z}^2$, we compute
\begin{align*}
    [\check{F}_m \cdot s]_l(x, \check{y})
    = & \sum_{p \in \mathbb{Z}} e^{2\pi\sqrt{-1} m_1\check{y}_1} \langle s \rangle ( x, \check{y}_1 - l - 2kp+2km_2) \cdot e^{\frac{\pi\sqrt{-1}}{k} (l + 2kp) \check{y}_2},
\end{align*}
using \eqref{eqn: multiplication by Fourier modes}. Shifting the summation index $p$ by $m_2$, we obtain $[\check{F}_m \cdot s]_l = \check{F}_m \cdot [s]_l$.

Finally, using the computations in the proof of \cite[Theorem 4.1]{ChaLeuLiSueYau2026}, we obtain
		\begin{align*}
            \left\langle \nabla_{\partial_{x_1}}^{\check{E}_0} s \right\rangle = \left( \frac{\partial}{\partial x_1} \right) \langle s \rangle, \quad & \left\langle \nabla_{\partial_{x_2}}^{\check{E}_0} s \right\rangle = \left( \frac{\partial}{\partial x_2} - \pi\sqrt{-1} k x_1 \right) \langle s \rangle,\\
            \left\langle \nabla_{\partial_{\check{y}_1}}^{\check{E}_0} s \right\rangle = \left( \frac{\partial}{\partial \check{y}_1} \right) \langle s \rangle, \quad &  \left\langle \nabla_{\partial_{\check{y}_2}}^{\check{E}_0} s \right\rangle = \left( -\pi\sqrt{-1} \cdot \frac{1}{k} \check{y}_1 \right) \langle s \rangle.
		\end{align*}
        Consequently, $\left[ \nabla_V^{\check{E}_0} s \right] = \check{\nabla}_V [s]$ holds for each coordinate vector field $V = \partial_{x_1}, \partial_{x_2}, \partial_{\check{y}_1}, \partial_{\check{y}_2}$. By $\mathcal{C}^\infty(\check{X}_U)$-linearity, the identity extends to all vector fields $V \in \Gamma(\check{X}_U, T\check{X}_U)$.
\end{proof}

In \cite[Proposition 5.6]{ChaLeuLiSueYau2026}, the authors, together with Chan--Leung--Li, constructed a $\mathcal{C}^\infty(U, \mathbb{C})$-linear map
$$\widetilde{\Phi}^{(k)}: \mathcal{C}^\infty(X_U, \mathbb{C}) \to \Gamma(\check{X}_U^{(k)}, \operatorname{End}(\check{E}_0^{(k)})),$$
which provides the local model for the semi-flat mirror transform as stated in \eqref{eqn: semi-flat mirror transform}. By \cite[Lemmas 5.10--5.12]{ChaLeuLiSueYau2026}, the map $\widetilde{\Phi}^{(k)}$ restricts to an isomorphism
$$\mathcal{O}(X_U) \cong H^0(\check{X}_U^{(k)}, \operatorname{End}(\check{E}_0^{(k)})).$$
We conclude this appendix by showing that, under the identification \eqref{eqn: identification of local models}, this restriction agrees with the local mirror transform $\Phi_U^{(k)}$ defined in \eqref{eqn: local mirror map}. Since both maps are algebra homomorphisms, it suffices to verify that they coincide on the monomials $u$ and $w$. The general case then follows by continuity with respect to the relevant topologies.


        \begin{proposition}
        For all $k \in \mathbb{Z}_{>0}$ and $s \in \Gamma(\check{X}_U^{(k)}, \check{E}_0^{(k)})$,
        \begin{align*}
            [\widetilde{\Phi}^{(k)}(u)(s)](x, \check{y}) = & \, e^{\pi \check{z}_1} C^{-1}[s](x, \check{y}),\\
            [\widetilde{\Phi}^{(k)}(w)(s)](x, \check{y}) = & \, e^{\pi \check{z}_2} S[s](x, \check{y}).
        \end{align*}
        \end{proposition}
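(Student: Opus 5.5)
The plan is to compute both sides through the intermediate Schwartz-function model $\langle s\rangle$, using the description of $\widetilde{\Phi}^{(k)}$ from \cite[Proposition 5.6]{ChaLeuLiSueYau2026}.

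\textbf{Step 1: the operators in the Schwartz model.} Recall that $\widetilde{\Phi}^{(k)}$ is built from twisted family Toeplitz operators. On the generating monomials $u=e^{\pi(x_1-2\sqrt{-1}y^2)}$ and $w=e^{\pi(x_2+2\sqrt{-1}y^1)}$ these operators act on $\langle s\rangle(x,\check{y}_1)$ in a simple way:
\begin{itemize}
\item $w$ acts by an exponential factor in $x_2$ composed with a translation of $\check{y}_1$ by one unit, possibly together with a unimodular phase;
\item $u$ acts by an exponential factor in $x_1$ times a modulation $e^{\pm\pi\sqrt{-1}\check{y}_1/k}$.
\end{itemize}
This is the standard Weyl/Heisenberg realization: a translation of $\check{y}$ by one unit together with a modulation by $1/(2k)$ of a full period. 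I will extract the exact constants, including any Gaussian or half-form normalization, from the computations cited in the proof of \cite[Theorem 4.1]{ChaLeuLiSueYau2026}. The consistency check is that $w^{2k}$ must act as multiplication by $e^{2\pi k x_2}\check{F}_{(0,1)}$, in agreement with \eqref{eqn: multiplication by Fourier modes}. Similarly, $u^{2k}$ must act as $e^{2\pi kx_1}\check{F}_{(1,0)}$ up to sign conventions.

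\textbf{Step 2: transport through $[\cdot]$.} Substitute these formulas into
\[
[s]_l=\sum_p\langle s\rangle(x,\check{y}_1-l-2kp)\,e^{\frac{\pi\sqrt{-1}}{k}(l+2kp)\check{y}_2}.
\]
For $w$, the unit shift in $\check{y}_1$ sends the $l$-th component to the $(l-1)$-th component, up to reindexing $p$ at the boundary $l=0$. This is exactly the action of the shift matrix $S$. Meanwhile, the extra factor $e^{\pi\sqrt{-1}\check{y}_2/k}$ produced by the shift of $l$ combines with the $x_2$-exponential to give $e^{\pi(x_2+\sqrt{-1}\check{y}_2/k)}=e^{\pi\check{z}_2}$.

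For $u$, the modulation evaluated at $\check{y}_1-l-2kp$ yields
\[
e^{\pi\sqrt{-1}\check{y}_1/k}\,e^{-\pi\sqrt{-1}l/k}\,e^{-2\pi\sqrt{-1}p}.
\]
The factor $e^{-\pi\sqrt{-1}l/k}=q^{l}$ is precisely the $l$-th diagonal entry of $C^{-1}$, and the $p$-dependence is trivial. Combining this with $e^{\pi x_1}$ gives $e^{\pi\check{z}_1}C^{-1}$.

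\textbf{Step 3: the expected obstacle.} The main difficulty is bookkeeping of conventions. One must check three things:
\begin{itemize}
\item the direction of the shift, so that the result is $S$ rather than $S^{-1}$;
\item the sign of the modulation, so that the result is $C^{-1}$ rather than $C$;
\item that no residual factor $q^{1/2}$ or Gaussian factor survives, since the Toeplitz quantization of \cite{ChaLeuLiSueYau2026} may carry a Weyl-ordering phase.
\end{itemize}
If a stray constant appears, I will absorb it by verifying the commutation relation $C^{-1}S=qSC^{-1}$ against $u\star_{k^{-1}}w=qw\star_{k^{-1}}u$. I will also check the normalization of $2k$-th powers against \eqref{eqn: multiplication by Fourier modes}. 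Together these pin down the constants uniquely.
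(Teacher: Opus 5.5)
Your route is the paper's route. The paper also quotes \cite[Proposition 5.6]{ChaLeuLiSueYau2026} for how the two Fourier modes act on $\langle s\rangle$, attaches $e^{\pi x_1}$ and $e^{\pi x_2}$ by $\mathcal{C}^\infty(U,\mathbb{C})$-linearity, and reindexes the sum defining $[s]_l$. Your Step 2 is exactly its computation: $e^{-2\pi\sqrt{-1}p}=1$ and $e^{-\pi\sqrt{-1}l/k}=q^l$ give $C^{-1}$, while the boundary reindexing $l\mapsto l-1$ gives $S$, with the extra $e^{\frac{\pi\sqrt{-1}}{k}\check{y}_2}$ completing $e^{\pi\check{z}_2}$.

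The fallback in Step 3 does not work, however. The proposition concerns a fixed map $\widetilde{\Phi}^{(k)}$, so a stray constant cannot be ``absorbed''; it would simply make the statement false. Your two checks also do not determine the constants. Multiplying $\widetilde{\Phi}^{(k)}(w)$ or $\widetilde{\Phi}^{(k)}(u)$ by any $\zeta$ with $\zeta^{2k}=1$ preserves both the $q$-commutation relation and the $2k$-th power normalization. In addition, you state your $u^{2k}$ check only ``up to sign''.

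So the argument genuinely rests on the exact formulas from Proposition 5.6, not on the connection computations in the proof of Theorem 4.1, which serve the preceding proposition. Those formulas are: $e^{2\pi\sqrt{-1}y^1}$ acts by $\langle s\rangle(x,\check{y}_1)\mapsto\langle s\rangle(x,\check{y}_1+1)$ with no phase, and $e^{-2\pi\sqrt{-1}y^2}$ acts by multiplication by $e^{\frac{\pi\sqrt{-1}}{k}\check{y}_1}$.

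On the latter, your normalization is the right one. The paper's displayed formula has $e^{\frac{2\pi\sqrt{-1}}{k}\check{y}_1}$, but its own computation uses $e^{\frac{\pi\sqrt{-1}}{k}\check{y}_1}$ through the factor $e^{\pi(\check{z}_1-\frac{\sqrt{-1}}{k}(l+2kp))}$. With $\frac{2\pi}{k}$ the result would be $e^{\pi x_1+\frac{2\pi\sqrt{-1}}{k}\check{y}_1}C^{-2}$, and your $u^{2k}$ check rules this out.
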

        \begin{proof}
        Applying \cite[Proposition 5.6]{ChaLeuLiSueYau2026} to Fourier modes $e^{2\pi\sqrt{-1} y^1}$ and $e^{-2\pi\sqrt{-1} y^2}$, we obtain
        \begin{align*}
            \langle \widetilde{\Phi}^{(k)}(e^{2\pi\sqrt{-1} y^1})(s) \rangle(x, \check{y}_1) = &\, \langle s \rangle(x, \check{y}_1+1),\\
			\langle \widetilde{\Phi}^{(k)}(e^{-2\pi\sqrt{-1} y^2})(s) \rangle(x, \check{y}_1) = &\, e^{\frac{2\pi \sqrt{-1}}{k} \check{y}_1} \cdot \langle s \rangle(x, \check{y}_1).
		\end{align*}
        Recall that $u = e^{\pi(x_1 - 2\sqrt{-1}y^2)}$ and $w = e^{\pi(x_2 + 2\sqrt{-1}y^1)}$. By $\mathcal{C}^\infty(U, \mathbb{C})$-linearity, for all $l = 0, \dots, 2k-1$,
        \begin{align*}
            [\widetilde{\Phi}^{(k)}(u)(s)]_l(x, \check{y}) = \sum_{p \in \mathbb{Z}} e^{\pi (\check{z}_1 - \frac{\sqrt{-1}}{k}(l+2kp))} \cdot \langle s \rangle (x, \check{y}_1 - l - 2kp) \cdot e^{\frac{\pi\sqrt{-1}}{k}(l+2kp) \check{y}_2} = e^{\pi \check{z}_1} q^{l'} [s]_{l'} (x, \check{y}).
        \end{align*}
        In the second equality, we used $\exp \left(\pi \cdot \left(-\tfrac{\sqrt{-1}}{k}\right) \cdot 2kp\right) = 1$.

        By $\mathcal{C}^\infty(U, \mathbb{C})$-linearity again, for each $l = 0, \dots, 2k-1$,
        \begin{align*}
            [\widetilde{\Phi}^{(k)}(w)(s)]_l(x, \check{y}) = & \sum_{p \in \mathbb{Z}} e^{\pi x_2} \cdot \langle s \rangle (x, \check{y}_1 + 1 - l - 2kp) \cdot e^{\frac{\pi\sqrt{-1}}{k}(l+2kp) \check{y}_2}\\
            = & e^{\pi \check{z}_2} \sum_{p \in \mathbb{Z}} \langle s \rangle (x, \check{y}_1 + 1 - l - 2kp) \cdot e^{\frac{\pi\sqrt{-1}}{k}(-1+l+2kp) \check{y}_2}.
        \end{align*}
        Reindexing the summation appropriately, we obtain, we obtain
        $$[\widetilde{\Phi}^{(k)}(w)(s)]_l(x, \check{y}) = e^{\pi \check{z}_2} [s]_{l'} (x, \check{y}),$$
        where $l' \in \{0, ..., 2k-1\}$ is the unique index satisfying $l' \equiv l-1 \pmod {2k}$. This completes the proof.
        \end{proof}

        In Section \ref{sec:mirror theorem}, the expressions for $\Phi_U^{(k)}(u)$ and $\Phi_U^{(k)}(w)$ are given as matrix-valued functions with respect to the holomorphic frame $e^{(\alpha)}$, whereas the matrix expressions for $\widetilde{\Phi}^{(k)}(u)$ and $\widetilde{\Phi}^{(k)}(w)$ in the preceding proposition are written with respect to the frame $1^{(\alpha)}$. However, these two choices of frames yield the same matrix expressions in this case. We therefore conclude, by the above proposition, that the restriction of $\widetilde{\Phi}^{(k)}$ coincides with $\Phi_U^{(k)}$.
        
	\bibliographystyle{amsplain}
	\bibliography{References}
	
\end{document}